\undefined\RequirePackage{dsfont}\fi
\undefined\RequirePackage{amsmath,amsfonts,amssymb,amsthm}\fi
\title{{\Large \bfseries{Functional inequalities for Feynman-Kac semigroups}}}
\author{ }
\author{James Thompson}
\date{\today}
\def\@MRExtract#1 #2!{#1}
\newcommand{\MR}[1]{
  \xdef\@MRSTRIP{\@MRExtract#1 !}%
  \href{http://www.ams.org/mathscinet-getitem?mr=\@MRSTRIP}{MR-\@MRSTRIP}}
\renewenvironment{thebibliography}[1]{%
  \section*{\refname
    \@mkboth{\MakeUppercase\refname}{\MakeUppercase\refname}}%
  \phantomsection%
  \addcontentsline{toc}{section}{\refname}%
  \list{\@biblabel{\@arabic\c@enumiv}}%
  {\settowidth\labelwidth{\@biblabel{#1}}%
    \small%
    \setlength{\labelsep}{0.4em}%
    \setlength{\leftmargin}{\labelwidth}%
    \addtolength{\leftmargin}{\labelsep}%
    \setlength{\itemsep}{-.25em}%
    \@openbib@code
    \usecounter{enumiv}%
    \let\p@enumiv\@empty
    \renewcommand\theenumiv{\@arabic\c@enumiv}}%
  \sloppy\clubpenalty4000\@clubpenalty\clubpenalty\widowpenalty4000%
  \sfcode`\.\@m}{%
  \def\@noitemerr{%
    \@latex@warning{Empty `thebibliography' environment}}%
  \endlist}
\let\mathbb=\mathds
\DeclareMathOperator{\divv}{div}
\DeclareMathOperator{\End}{End}
\DeclareMathOperator{\tr}{tr}
\DeclareMathOperator{\Aut}{Aut}
\DeclareMathOperator{\loc}{loc}
\def\<{\langle}
\def\>{\rangle}
\def\Hess{\mathop{\rm Hess}}
\def\Ric{\mathop{\rm Ric}}
\def\id{\mathop{\rm id}}
\def\V{\mathbb V}
\newcommand{\E}{\mathbb{E}}
\newtheorem{theorem}{Theorem}[section]
\newtheorem{proposition}[theorem]{Proposition}
\begin{document}

\maketitle

\begin{abstract}%
   \noindent%
   {Using the tools of stochastic analysis, we prove various gradient estimates and Harnack inequalities for Feynman-Kac semigroups with possibly unbounded potentials. One of the main results is a derivative formula which can be used to characterize a lower bound on Ricci curvature using a potential.}\\[1em]%
   {\footnotesize%
     \textbf{Keywords: Feynman-Kac, Gradient, Harnack, Schr\"{o}dinger}{}\par%
     \noindent\textbf{AMS MSC 2010: 47D08; 58J65; 58Z05; 60H30 }%
     {}\par
   }
 \end{abstract}

\section{Introduction}

Suppose $M$ is a geodesically and stochastically complete and connected Riemannian manifold of dimension $n$, with Levi-Civita connection $\nabla$ and Laplace-Beltrami operator $\Delta$. Suppose $V \in L^2_{\loc}(M)$ is bounded below. Then the operator $H:=-\frac{1}{2}\Delta +V$ is essentially self-adjoint \cite{BMS} and bounded below. Essential self-adjointness means that $H$ can be uniquely extended from the domain $C_0^\infty(M)$ of smooth functions with compact support to a self-adjoint operator $H$ with maximal domain $D(H)=\lbrace f : f, Hf \in L^2(M)\rbrace$. The corresponding semigroup generated by $-H$ consists of bounded self-adjoint operators on $L^2(M)$. Under certain conditions on $V$ the semigroup is continuous and given by the Feynman-Kac formula
\begin{equation}
P^V_tf(x) = \E\left[ \V^x_t f(X_t(x))\right].
\end{equation}
Here $X(x)$ is a Brownian motion on $M$ starting at $x$ and
\begin{equation}
\V_t^x := e^{-\int_0^t V(X_s(x)) ds}
\end{equation}
for $x \in M$ and $t \geq 0$. A class of potentials that is of particular interest is those $V$ for which
\begin{equation}
\lim_{t\downarrow 0} \sup_{x\in M} \E\left[\int_0^t |V(X_s(x))| d s \right] = 0.
\end{equation}
Such $V$ is said to belong to the Kato class, first introduced by Kato in \cite{Kato1972} using an equivalent definition. The local Kato class consists of those potentials $V$ for which $\mathbf{1}_K V$ belongs to the Kato class for any compact set $K \subset M$. This is a very large class of potentials, encompassing nearly all physically relevant phenomena, for which one can still expect the Feynman-Kac formula to make sense pointwise for all $x \in M$. We refer to Aizenman and Simon \cite{AizenmanSimon1982} and Simon \cite{Simon1982} for a comprehensive account of all this, and to the more recent work of G\"{u}neysu \cite{Guneysu2012} for generalizations to Schr\"{o}dinger type operators on vector bundles.

Our focus will be on the derivative, or gradient, of the Feynman-Kac semigroup $P^V_tf$. For the heat semigroup $P_tf$, a probabilistic formula for the derivative $dP_tf$, not involving the derivative of $f$, was proved by Bismut in \cite{Bismut1984} using techniques from Malliavin calculus. A transparent generalization of this, using an argument based on martingales, was later given by Elworthy and Li in \cite{ElworthyLi1994} and developed further by Thalmaier in \cite{Thalmaier1997}. Thalmaier's formulation is based on local martingales and allows to localize the contribution of Ricci curvature. A formula for $dP^V_tf$ was proved by Elworthy and Li in \cite{ElworthyLi1994}, some applications of which were recently explored in \cite{LiThompson2018}.

The present work is a continuation of the author's recent article \cite{hessianpaperone}, in which localized versions of the derivative formula were proved, together with an extension to the Hessian. In all cases, these derivative formulae involve the derivative of $V$. Therefore, as in \cite{ElworthyLi1994} and \cite{hessianpaperone}, we will assume throughout that $V$ is continuously differentiable.

In Section \ref{sec:gradests}, we start by using the derivative formula from \cite{hessianpaperone} to obtain explicit gradient estimates for $P^V_tf$. These estimates are then used to derive Theorem \ref{thm:derivform}, which states that if
\begin{equation}\label{eq:conditionvintro}
\sup_{x \in M} \E \left[ \int_0^t |dV|(X_s(x))ds\right] < \infty
\end{equation}
with the Ricci curvature of $M$ bounded below, then
\begin{equation}
(dP^V_tf)(v) = \E\left[ \V_t^x ((df)_{X_t(x)}(//_tQ_t v) -f(X_t(x))\int_0^t (dV)_{X_s(x)}(//_s Q_s v)ds)\right]
\end{equation}
for all $f\in C_b^1(M)$, $x \in M$, $v \in T_xM$ and $t \geq 0$. The composition $Q//^{-1}$ is the damped parallel transport along the paths of $X(x)$, determined by the solution to equation \eqref{eq:eqforQ} below, and $C_b^1(M)$ denotes the space of all bounded continuously differentiable functions on $M$ with bounded derivative. Note that condition \eqref{eq:conditionvintro} is satisfied if $|dV|$ belongs to the Kato class.

One application of this derivative formula is that it allows to characterize a lower bound on Ricci curvature using the potential $V$, as explained in Section \ref{sec:characters}. For example, suppose $V\in C^1(M)$ is bounded below with $\nabla V$ bounded and $K \in \mathbb{R}$. Then, according to Theorem \ref{thm:gradestbas}, the following are equivalent:
\begin{itemize}
\item[\normalfont{(1)}] $\Ric \geq 2K$;
\item[\normalfont{(2)}] if $f\in C_b^1(M)$ then
\begin{equation}
|\nabla P^V_tf| \leq e^{-Kt}P^V_t|\nabla f| + \|\nabla V\|_\infty\left(\frac{1-e^{-Kt}}{K}\right)P_t^V |f|
\end{equation}
for all $t \geq 0$.
\end{itemize}
In particular, if the above gradient estimate is verified for some suitable $V$ then it must hold for all such $V$.

The derivative formula stated above is also applied in Section \ref{sec:harn} to prove a Harnack inequality, given by Theorem \ref{thm:harnack}. Afterwards, in Section \ref{sec:shiftharn}, we prove the counterpart to the Bismut formula for differentiation inside the semigroup, namely Theorem \ref{thm:diffformula}, and use it to derive two further derivative estimates, Propositions \ref{prop:firstdivest} and \ref{prop:seconddivest}, with corresponding shift-Harnack inequalities, Theorems \ref{thm:shiftharnone} and \ref{thm:shiftharntwo}. Shift-Harnack inequalities were introduced by Wang in \cite{Wangharn}. Exploring such inequalities in the present setting was motivated by a question posed by Feng-Yu Wang during a workshop at the University of Swansea in 2017.

\section{Uniform gradient estimates}\label{sec:gradests}

In \cite{PW}, Priola and Wang derived uniform gradient estimates for semigroups generated by second order elliptic operators with irregular and unbounded coefficients and bounded, measurable potentials $V$. They did not require local H\"{o}lder continuity of the coefficients, instead replacing the gradient of the semigroup with the Lipschitz constant. Although we will only consider the particular operator $-\frac{1}{2}\Delta +V$, we do allow for the potential to be unbounded.

Let us first briefly consider the case in which the potential $V$ is bounded. In this case, we have the following gradient estimate, which depends only on the supremum norm of $V$:

\begin{proposition}
Suppose $V\in C^1(M)$ is bounded with $\Ric \geq 2K$. Then for all bounded measurable functions $f$ we have
\begin{equation}\label{eq:vocestimate}
\|dP_t^Vf\|_\infty \leq \sqrt{\frac{2}{\pi t}} e^{K^-t} \left(1 + 2t\|V\|_\infty e^{-t \inf V }\right) \|f\|_\infty\\
\end{equation}
for all $t>0$.
\end{proposition}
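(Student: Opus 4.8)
The plan is to use the Feynman--Kac formula together with a martingale argument of Bismut--Elworthy--Li type, where the potential is absorbed into the multiplicative functional. Since $V$ is bounded, the functional $\V^x_t = \exp(-\int_0^t V(X_s(x))\,ds)$ is bounded between $e^{-t\|V\|_\infty}$ and $e^{t\|V\|_\infty}$, but one can do better: because $H = -\tfrac12\Delta + V$ and $-\inf V$ controls the growth of the semigroup from above, we have $\|P^V_t\|_{\infty\to\infty} \le e^{-t\inf V}$. The idea is to differentiate $P^V_t f$ by splitting the time interval, writing $P^V_t f = P^V_{t/2}(P^V_{t/2}f)$, and applying a derivative formula on the first half-interval while using boundedness of $P^V_{t/2}f$ on the second.

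First I would recall from \cite{hessianpaperone} the localized derivative formula for $dP^V_t f$. On the interval $[0,t]$, choosing a bounded adapted process $\ell$ with $\ell_0 = 1$, $\ell_t = 0$ and $\ell$ of finite variation (the standard choice being $\ell_s = 1 - s/t$ on $[0,t]$), one obtains a formula of the shape
\begin{equation}
(dP^V_t f)(v) = \E\Big[ \V^x_t\, f(X_t(x)) \int_0^t \<\dot\ell_s\, //^{-1}_s Q_s v, dB_s\> \Big] - \E\Big[ \V^x_t\, f(X_t(x)) \int_0^t \ell_s (dV)_{X_s(x)}(//_s Q_s v)\, ds \Big],
\end{equation}
valid here because $V$, $dV$ are bounded and $\Ric$ is bounded below, so all the integrability needed is automatic. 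For the supremum estimate, though, it is cleaner to apply this on $[0,t]$ directly to $f$ bounded measurable, but then the $dV$-term carries a factor $\|dV\|_\infty$ which is not what appears in \eqref{eq:vocestimate}. So instead I would apply the formula only to the heat-type part and handle the potential separately.

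The cleaner route, which produces exactly the stated bound, is a Duhamel/perturbation argument. Write the variation-of-constants identity
\begin{equation}
P^V_t f = P_t f - \int_0^t P_{t-s}\big( V\, P^V_s f \big)\, ds,
\end{equation}
differentiate both sides, and apply the known sharp heat-semigroup gradient estimate $\|dP_t g\|_\infty \le \sqrt{2/(\pi t)}\, e^{K^- t}\|g\|_\infty$ (the Bismut--Elworthy--Li estimate under $\Ric \ge 2K$, with the $\sqrt{2/\pi t}$ constant coming from the $L^1$-norm of a standard Gaussian). This gives
\begin{equation}
\|dP^V_t f\|_\infty \le \sqrt{\tfrac{2}{\pi t}} e^{K^- t}\|f\|_\infty + \int_0^t \sqrt{\tfrac{2}{\pi(t-s)}}\, e^{K^-(t-s)} \|V\|_\infty \|P^V_s f\|_\infty\, ds.
\end{equation}
Using $\|P^V_s f\|_\infty \le e^{-s\inf V}\|f\|_\infty$ and $e^{K^-(t-s)} \le e^{K^- t}$, the integral is bounded by $\|V\|_\infty e^{K^- t}\|f\|_\infty \int_0^t \sqrt{2/(\pi(t-s))}\, e^{-s\inf V}\,ds$. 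A crude but sufficient bound is $e^{-s\inf V} \le e^{t(\inf V)^-} = e^{-t\inf V}$ when $\inf V \le 0$ (and $\le 1$ otherwise, in which case $e^{-t\inf V}\ge 1$ anyway), together with $\int_0^t \sqrt{2/(\pi(t-s))}\,ds = \sqrt{2/\pi}\cdot 2\sqrt t = 2\sqrt{2t/\pi}$. Collecting terms yields
\begin{equation}
\|dP^V_t f\|_\infty \le \sqrt{\tfrac{2}{\pi t}} e^{K^- t}\Big( 1 + 2t\|V\|_\infty e^{-t\inf V}\Big)\|f\|_\infty,
\end{equation}
which is exactly \eqref{eq:vocestimate}.

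**Main obstacle:** The principal technical point is justifying the Duhamel identity and the interchange of $d$ with the time integral at the level of pointwise/uniform bounds — i.e., showing $dP^V_t f = dP_t f - \int_0^t d\big(P_{t-s}(V P^V_s f)\big)\,ds$ with the integral converging in sup-norm. Since $V\in C^1$ is bounded, $V P^V_s f$ is bounded measurable for each $s$, and the heat-semigroup gradient bound shows $\|d P_{t-s}(V P^V_s f)\|_\infty$ is integrable in $s$ near $s=t$ (the singularity $(t-s)^{-1/2}$ is integrable); dominated convergence and a standard approximation of $f$ by smooth functions then close the argument. An alternative, avoiding Duhamel entirely, is to use the semigroup property $P^V_t f = P_{t/2}^{\,V=0}$-type splitting is not available since $V\ne 0$, so one would instead split as $P^V_t f = P^V_{t/2} P^V_{t/2} f$ and apply the Elworthy--Li formula on $[0,t/2]$ to the bounded function $g := P^V_{t/2}f$ with $\|g\|_\infty \le e^{-(t/2)\inf V}\|f\|_\infty$; controlling the Brownian and $dV$ integrals on $[0,t/2]$ then produces a bound of the same form, though tracking the constant to get precisely $2t\|V\|_\infty$ is more delicate. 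I expect the Duhamel approach to be the most economical.
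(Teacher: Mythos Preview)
Your proposal is correct and follows essentially the same route as the paper: the variation-of-constants (Duhamel) identity $P^V_t f = P_t f - \int_0^t P_{t-s}(V P^V_s f)\,ds$, differentiated and combined with the standard Bismut gradient bound $\|dP_t g\|_\infty \le \sqrt{2/(\pi t)}\,e^{K^- t}\|g\|_\infty$ and the trivial estimate $\|P^V_s f\|_\infty \le e^{-s\inf V}\|f\|_\infty$, followed by integration of the $(t-s)^{-1/2}$ singularity. Your write-up is in fact more detailed than the paper's (which omits the justification of commuting $d$ with the integral and the explicit handling of $\|P^V_s f\|_\infty$); one small slip: your parenthetical ``$e^{-t\inf V}\ge 1$'' when $\inf V>0$ is backwards, though this edge case does not affect the argument for $\inf V\le 0$, which is the only regime in which the stated constant is sharp anyway.
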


\begin{proof}
According to the variation of constants formula we have
\begin{equation}
P_t^Vf = P_tf - \int_0^t P_{t-s}(V P^V_sf)ds
\end{equation}
and therefore
\begin{equation}\label{eq:estunibnd}
|dP_t^Vf| \leq |dP_tf| + \int_0^t |dP_{t-s}(V P^V_sf)| ds.
\end{equation}
It is well known that
\begin{equation}
|dP_tf| \leq  \sqrt{\frac{2}{\pi t}} e^{K^-t} \|f\|_\infty
\end{equation}
and similarly for the integrand. The result therefore  follows by substituting these estimates into \eqref{eq:estunibnd}, integrating the latter. 
\end{proof}

We now turn our attention to the case of unbounded $V$. Let us recall the Bismut formula for $P^V_tf$ that was proved in \cite{hessianpaperone}. For this, denote by $//$ the stochastic parallel transport along the paths of $X(x)$ and by $B$ the anti-development of $//$ to $T_xM$. Then $B$ is a Brownian motion on $T_xM$ starting at the origin. Denote by $Q$ the $\End(T_xM)$-valued solution to the ordinary differential equation
\begin{equation}\label{eq:eqforQ}
\frac{d}{dt} Q_t  = -\frac12//_t^{-1} {\Ric}^\sharp //_tQ_t
\end{equation}
with $Q_0 = \id_{T_xM}$. The composition $Q//^{-1}$ is called the damped parallel transport. Now suppose $D$ is a regular domain in $M$ and denote by $\tau$ the first exit time of $X(x)$ from $D$. In \cite{hessianpaperone} it was proved, for a bounded measurable function $f$ and $x \in D$, that there is the following local derivative formula:
\begin{equation}\label{eq:bismutloc}
(dP^V_tf)_x  = -\mathbb{E}\left[ \V^x_t f(X_t(x))\left(\int_0^{t} \langle Q_s\dot{h}_s,dB_s\rangle + dV(//_s Q_sh_s)ds\right)\right]
\end{equation}
for all $t>0$, where $h$ is any bounded adapted process with paths belonging to the Cameron-Martin space $L^{1,2}([0,t];\Aut(T_{x}M))$ such that $h_0=1$, $h_s=0$ for $s \geq \tau \wedge t$ with $\E[\int_0^{\tau \wedge t} |\dot{h}_s|^2ds] < \infty$. Such $h$ can always be constructed, as shown for example in \cite{Ch-Tho-Tha:2017a}. It follows that if the Ricci curvature is bounded below, in the sense that $$\inf\lbrace \Ric(v,v) : v\in TM, |v|=1\rbrace > -\infty,$$ and if
\begin{equation}
\kappa_V(t):=\sup_{x \in M} \E \left[ \int_0^t |dV|(X_s(x))ds\right] < \infty
\end{equation}
then $dP^Vf$ is uniformly bounded on $[\epsilon,t]\times M$ for all $\epsilon>0$ and $t>0$. Arguing as in the proof of \cite[Theorem~7]{hessianpaperone}, it follows that in this case there is the following non-local, but explicit, version of the derivative formula:
\begin{equation}\label{eq:bismut}
(dP^V_tf)_x  = \frac{1}{t}\mathbb{E}\left[ \V^x_t f(X_t(x))\left(\int_0^{t} \langle Q_s,dB_s\rangle - (t-s) dV(//_s Q_s)ds\right)\right]
\end{equation}
for all $t>0$. Using formula \eqref{eq:bismut}, we can quickly derive some simple gradient estimates. For convenience, and without loss of generality, we will assume that $V$ is non-negative. Note also that in the sequel, $K$ will always denote a real-valued constant.

\begin{theorem}\label{thm:linfest}
Suppose $V\in C^1(M)$ is non-negative with $\kappa_V(t)<\infty$ and $\Ric \geq 2K$. Then for all $f \in L^\infty(M)$ we have
\begin{equation}
\|dP^V_t f\|_\infty \leq e^{K^-t} \left( \sqrt{\frac{2}{\pi t}}  + \kappa_V(t) \right)\|f\|_\infty
\end{equation}
for all $t>0$.
\end{theorem}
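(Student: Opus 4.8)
The plan is to start from the explicit non-local Bismut formula \eqref{eq:bismut}, take norms, and estimate the two terms inside the expectation separately. First I would write
\begin{equation}
|(dP^V_tf)_x| \leq \frac{\|f\|_\infty}{t}\, \E\left[\V^x_t \left| \int_0^t \langle Q_s, dB_s\rangle\right|\right] + \frac{\|f\|_\infty}{t}\, \E\left[\V^x_t \int_0^t (t-s)|dV|(X_s(x))\, |Q_s|\, ds\right],
\end{equation}
using $|f(X_t(x))| \le \|f\|_\infty$ and $\V^x_t \le 1$ (here I use that $V \ge 0$, which the excerpt allows us to assume without loss of generality). The curvature hypothesis $\Ric \ge 2K$ gives the pathwise bound $|Q_s| \le e^{-Ks} \le e^{K^- s}$ from \eqref{eq:eqforQ}, so $|Q_s| \le e^{K^- t}$ for $s \le t$; this lets me pull a factor $e^{K^- t}$ out of both terms.

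For the martingale term, I would drop $\V^x_t \le 1$ and apply Cauchy--Schwarz (or just Jensen, since $x \mapsto |x|$ is not what we want — rather Cauchy--Schwarz in the form $\E|Y| \le (\E|Y|^2)^{1/2}$) to get
\begin{equation}
\E\left[\left| \int_0^t \langle Q_s, dB_s\rangle\right|\right] \leq \left(\E\left[\left| \int_0^t \langle Q_s, dB_s\rangle\right|^2\right]\right)^{1/2} = \left(\E\left[\int_0^t |Q_s|^2\, ds\right]\right)^{1/2} \leq \left(\int_0^t e^{2K^- s}\, ds\right)^{1/2},
\end{equation}
by the Itô isometry and the bound on $|Q_s|$. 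One must be slightly careful about which matrix norm appears: in \eqref{eq:bismut} the integrand $\langle Q_s, dB_s\rangle$ should be read as the $T_xM$-valued stochastic integral $\int_0^t Q_s\, dB_s$ (so that pairing with $v \in T_xM$ recovers the scalar formula), and then the Itô isometry produces $\E\int_0^t \|Q_s\|_{HS}^2\, ds$ with the Hilbert--Schmidt norm; bounding $\|Q_s\|_{HS}$ by a dimensional constant times the operator norm would introduce an unwanted $\sqrt{n}$. The cleanest route, and presumably the intended one, is to test against a fixed unit vector $v$ first, write $(dP^V_tf)(v) = \frac{1}{t}\E[\V^x_t f(X_t(x))(\int_0^t \langle Q_s v, dB_s\rangle - (t-s)dV(//_sQ_s v)ds)]$, and then $\E|\int_0^t \langle Q_s v, dB_s\rangle|^2 = \E\int_0^t |Q_s v|^2 ds \le \int_0^t e^{2K^- s}ds$. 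Evaluating $\frac1t(\int_0^t e^{2K^-s}ds)^{1/2}$ and checking it is $\le e^{K^-t}\sqrt{2/(\pi t)}$ is the one genuinely quantitative point; when $K^- = 0$ this reads $\frac1t \sqrt{t} = t^{-1/2} \le \sqrt{2/(\pi t)}$, which fails — so in fact the $\sqrt{2/\pi t}$ must come from a sharper estimate of $\E|\int_0^t \langle Q_sv,dB_s\rangle|$ directly (e.g. comparing with a time-changed one-dimensional Brownian motion, for which $\E|B_\tau| = \sqrt{2\tau/\pi}$ and $\tau = \int_0^t|Q_sv|^2ds \le t e^{2K^-t}$), rather than through the second moment. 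This is the step I expect to be the main obstacle: getting the constant $\sqrt{2/\pi}$ rather than $1$ requires the first-moment (not second-moment) bound on the stochastic integral, using that a continuous $L^2$ martingale is a time-changed Brownian motion and $\E|B_\sigma| = \sqrt{2\sigma/\pi}$ for deterministic comparison times, combined with the deterministic bound $\int_0^t |Q_sv|^2ds \le te^{2K^-t}$ so that $\frac1t\sqrt{2te^{2K^-t}/\pi} = e^{K^-t}\sqrt{2/(\pi t)}$.

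For the potential term, I would bound $(t-s) \le t$ and $|Q_s| \le e^{K^-t}$, so that
\begin{equation}
\frac{1}{t}\E\left[\V^x_t \int_0^t (t-s)|dV|(X_s(x))\,|Q_s|\,ds\right] \leq e^{K^-t}\, \E\left[\int_0^t |dV|(X_s(x))\,ds\right] \leq e^{K^-t}\kappa_V(t),
\end{equation}
where the last inequality is the definition of $\kappa_V(t)$ and the supremum over $x \in M$. Adding the two contributions gives $|(dP^V_tf)_x| \le e^{K^-t}\|f\|_\infty(\sqrt{2/(\pi t)} + \kappa_V(t))$ for every $x$ and every unit $v$, hence $\|dP^V_tf\|_\infty \le e^{K^-t}(\sqrt{2/(\pi t)} + \kappa_V(t))\|f\|_\infty$, as claimed. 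The only subtleties are the non-negativity normalisation of $V$ (already granted in the text), the integrability needed to justify \eqref{eq:bismut} (granted by the assumption $\kappa_V(t) < \infty$ together with $\Ric \ge 2K$, as noted in the excerpt), and the sharp first-moment martingale estimate discussed above.
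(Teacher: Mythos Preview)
Your approach is the paper's: start from the explicit Bismut formula \eqref{eq:bismut}, bound $|f|$ by $\|f\|_\infty$ and $\V^x_t$ by $1$, then estimate the stochastic-integral term and the $dV$ term separately. Your treatment of the potential term is identical to the paper's.

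For the martingale term the paper simply asserts that $\int_0^t \langle Q_s v, dB_s\rangle$ is a centred Gaussian random variable with variance $\int_0^t |Q_s v|^2\,ds \leq t e^{2K^-t}$, and then uses $\E|Z| = \sqrt{2/\pi}\,\sigma$ for $Z \sim N(0,\sigma^2)$. You are right to be uneasy about this: $Q_s$ solves an ODE whose coefficient $//_s^{-1}\Ric^\sharp//_s$ depends on the Brownian path, so unless the Ricci tensor is constant the integrand is genuinely random and the stochastic integral is \emph{not} Gaussian in general. Your time-change idea is the correct repair, but it needs one further remark to close. By Dambis--Dubins--Schwarz, $N_t := \int_0^t \langle Q_s v, dB_s\rangle = \tilde B_\tau$ for a one-dimensional Brownian motion $\tilde B$ and the stopping time $\tau = \int_0^t |Q_s v|^2\,ds$, which is almost surely bounded by the deterministic constant $C := t e^{2K^- t}$. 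Since $|\tilde B_s|$ is a submartingale, optional sampling with $\tau \leq C$ gives
\[
\E\left|\int_0^t \langle Q_s v, dB_s\rangle\right| = \E|\tilde B_\tau| \leq \E|\tilde B_C| = \sqrt{\tfrac{2C}{\pi}} = e^{K^- t}\sqrt{\tfrac{2t}{\pi}},
\]
and dividing by $t$ yields exactly $e^{K^- t}\sqrt{2/(\pi t)}$. So your concern that Cauchy--Schwarz loses the constant $\sqrt{2/\pi}$ is well founded, your proposed fix via time change is correct, and with the submartingale comparison made explicit your argument is in fact more careful than the paper's own.
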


\begin{proof}
Since for each $t>0$ the stochastic integral $\int_0^t \langle Q_s,dB_s\rangle$ is a centred Gaussian random variable with variance $\int_0^t \| Q_s\|^2 ds \leq t e^{2K^-}$ and since for centred Gaussian random variable with variance $1$ we have $\E[|X|] = \sqrt{\frac{2}{\pi}}$, we have by formula \eqref{eq:bismut}, that
\begin{equation}
\begin{split}
|dP^V_t f|(x) \leq\text{ }& \|f\|_\infty \frac{1}{t}\left( \E \left[ \bigg|\int_0^t \langle Q_s,dB_s\rangle \bigg|\right] + e^{K^-t}\E \left[ \int_0^t (t-s)|dV|(X_s(x))ds\right] \right)\\
\leq \text{ }& e^{K^- t} \|f\|_\infty \left( \sqrt{\frac{2}{\pi t}} +\E \left[ \int_0^t |dV|(X_s(x))ds\right] \right)
\end{split}
\end{equation}
from which the result follows, by the definition of $\kappa_V(t)$. 
\end{proof}

For $p>1$ we can similarly obtain an estimate using the $L^p$ norm (as opposed to the $L^\infty$ norm), for which we should assume that
\begin{equation}
\kappa_{V,q}(t):=\sup_{x \in M} \E\left[\left(\int_0^t |dV|(X_s(x))ds\right)^q\right]^\frac{1}{q} < \infty
\end{equation}
where $q$ is the conjugate of $p$.

\begin{theorem}\label{thm:lpest}
Suppose $V\in C^1(M)$ is non-negative with $\Ric \geq 2K$. Suppose $p>1$, set $q=p/(p-1)$ and assume $\kappa_{V,q}(t)<\infty$. Then for all $f \in L^p(M)$ we have
\begin{equation}
\|dP^V_t f\|_{p} \leq e^{K^-t}  \left(\frac{C_q^{1/q}}{\sqrt{t}}  + \kappa_{V,q}(t) \right)\|f\|_{p}
\end{equation}
for all $t>0$, where $C_q$ is the constant from the Burkholder-Davis-Gundy inequality.
\end{theorem}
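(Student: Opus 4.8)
The plan is to follow the proof of Theorem~\ref{thm:linfest}, making two substitutions: the crude bound $|f(X_t(x))|\le\|f\|_\infty$ is replaced by Hölder's inequality on the underlying probability space with exponents $p$ and $q$, and the first-moment bound for $\int_0^t\langle Q_s,dB_s\rangle$ is replaced by the $q$-th moment bound supplied by the Burkholder--Davis--Gundy inequality. First I would observe that Jensen's inequality on the probability space gives $\kappa_V(t)\le\kappa_{V,q}(t)<\infty$, so that the non-local derivative formula~\eqref{eq:bismut} is available under the present hypotheses. It is enough to prove the estimate for $f$ bounded, measurable and compactly supported, since such functions are dense in $L^p(M)$; as $P^V_t$ is a contraction on $L^p(M)$ (using $\V^x_t\le1$ and the $L^p$-contractivity of the heat semigroup), the inequality then extends to all $f\in L^p(M)$ by a routine limiting argument, identifying the distributional limit of $dP^V_tf_n$ with $dP^V_tf$.

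For such $f$ and $x\in M$, formula~\eqref{eq:bismut}, together with $\V^x_t\le1$, the triangle inequality, and the estimate $|dV(//_sQ_s)|\le\|Q_s\|\,|dV|(X_s(x))\le e^{K^-t}|dV|(X_s(x))$ valid for $s\le t$, yields
\begin{equation}
|dP^V_tf|(x)\le\frac1t\,\E\!\left[|f(X_t(x))|\left(\left|\int_0^t\langle Q_s,dB_s\rangle\right|+e^{K^-t}\int_0^t(t-s)|dV|(X_s(x))\,ds\right)\right].
\end{equation}
Hölder's inequality with exponents $p$ and $q$ factors out $\E[|f(X_t(x))|^p]^{1/p}=(P_t|f|^p)^{1/p}(x)$, and Minkowski's inequality in $L^q(\Omega)$, combined with $t-s\le t$, bounds the remaining factor by
\begin{equation}
\frac1t\,\E\!\left[\left|\int_0^t\langle Q_s,dB_s\rangle\right|^q\right]^{1/q}+e^{K^-t}\,\E\!\left[\left(\int_0^t|dV|(X_s(x))\,ds\right)^q\right]^{1/q}.
\end{equation}
I would estimate the first term using the Burkholder--Davis--Gundy inequality and $\int_0^t\|Q_s\|^2\,ds\le te^{2K^-t}$, which gives the bound $C_q^{1/q}e^{K^-t}/\sqrt t$, and the second is at most $e^{K^-t}\kappa_{V,q}(t)$ by the definition of $\kappa_{V,q}(t)$. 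Together these produce the pointwise estimate
\begin{equation}
|dP^V_tf|(x)\le e^{K^-t}\left(\frac{C_q^{1/q}}{\sqrt t}+\kappa_{V,q}(t)\right)(P_t|f|^p)^{1/p}(x).
\end{equation}

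Finally I would raise this inequality to the power $p$, integrate over $M$ against the Riemannian volume measure, and invoke Tonelli's theorem together with the fact that the heat semigroup is conservative (by stochastic completeness) and symmetric with respect to the volume measure, so that $\int_M P_t|f|^p\,d\vol=\int_M|f|^p\,d\vol=\|f\|_p^p$; taking $p$-th roots then gives the claimed inequality. I do not expect a genuine obstacle, since this is the direct $L^p$-analogue of Theorem~\ref{thm:linfest}; the points needing the most care are the density/limiting argument that reduces matters to nice $f$ and the routine check that $\kappa_{V,q}(t)<\infty$ and $\Ric\ge2K$ supply all the integrability needed for Hölder, Minkowski and Fubini--Tonelli.
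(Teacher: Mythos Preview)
Your proposal is correct and follows essentially the same route as the paper's proof: apply H\"older's inequality to the Bismut formula~\eqref{eq:bismut}, bound the stochastic integral via Burkholder--Davis--Gundy and $\|Q_s\|\le e^{K^-t}$, and finish by integrating the pointwise estimate over $M$ using Fubini. Your treatment is in fact more careful than the paper's in noting that $\kappa_V(t)\le\kappa_{V,q}(t)$ justifies invoking~\eqref{eq:bismut}, and in spelling out that the final step uses conservativeness and symmetry of the heat semigroup (the paper simply says ``by Fubini's theorem'').
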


\begin{proof}
By the Burkholder-Davis-Gundy inequality, we see that
\begin{align}
\E\left[ \bigg\vert \int_0^{t} \langle Q_s,dB_s\rangle \bigg\vert^q\right] \leq \, & \E\left[ \left( \sup_{0 \leq s \leq t} \bigg\vert \int_0^{s} \langle Q_r,dB_r\rangle \bigg\vert \right)^q\right]\\
\leq \, & C_q \E\left[ \left( \int_0^t \| Q_s\|^2 ds \right)^{\frac{q}{2}}\right]
\end{align}
and so, by formula \eqref{eq:bismut} and H\"{o}lder's inequality, we have
\begin{equation}
\begin{split}
|dP^V_t f|(x) &\leq \E\left[ |f|^p(X_{t}(x))\right]^{1/p} \frac{1}{t}\left(\E\left[ \bigg\vert \int_0^{t} \langle Q_s,dB_s\rangle \bigg\vert^q\right]^{1/q} \right.\\
&\quad\quad\quad\left.+\,e^{K^-t} \E\left[\left(\int_0^t (t-s)|dV|(X_s(x))ds\right)^q\right]^\frac{1}{q}\right)\\
&\leq e^{K^-t}  \E\left[ |f|^p(X_{t}(x))\right]^{1/p}\left( C_q^{1/q}  \frac{1}{\sqrt{t}} + \kappa_{V,q}(t)\right)
\end{split}
\end{equation}
for all $t> 0$ and $x \in M$. Thus
\begin{equation}
\| dP^V_t f \|_{p} \leq  e^{K^-t} \left( \frac{C_q^{1/q}}{\sqrt{t}} + \kappa_{V,q}(t)\right)  \left( \int_{M} \E\left[ |f|^p(X_{t}(x))\right] dx \right)^{1/p}
\end{equation}
and the result follows by Fubini's theorem. 
\end{proof}

Theorem \ref{thm:linfest} implies that if the Ricci curvature is bounded below then $dP^Vf$ is bounded on $[\epsilon,\infty) \times M$ for each $\epsilon >0$. A similar observation applies to Theorem \ref{thm:lpest}. Our next step will be to use Theorems \ref{thm:linfest} and \ref{thm:lpest} to obtain quantitative estimates which are uniform across all space and time. The price we pay is the inclusion of a term involving $H f$. These are generalizations of the estimate proved by Cheng, Thalmaier and the author in \cite{Ch-Tho-Tha:2017a} and \cite{Ch-Tho-Tha:2018b}.

\begin{theorem}\label{thm:uniformlinfest}
Suppose $V \in C^1(M)$ is non-negative with $\kappa_V(\delta)<\infty$ for some $\delta>0$ and $\Ric \geq 2K$. Suppose $f \in C^2(M)$ with $f,Hf$ bounded. Then the derivative $dP^V f$ is uniformly bounded on $[0,\infty)\times M$ and moreover
\begin{align}
\|dP^V_tf\|_\infty \leq e^{K^-\delta}\left(\left(\sqrt{\frac{2}{\pi \delta}}+\kappa_V(\delta)\right)\|f\|_\infty + \delta\left(\sqrt{\frac{8}{\pi \delta}}+\kappa_V(\delta)\right)\|H f\|_\infty\right)
\end{align}
for all $t\geq 0$.
\end{theorem}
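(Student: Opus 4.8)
The plan is to handle times $t\geq\delta$ and $t<\delta$ separately, in each case reducing matters to Theorem~\ref{thm:linfest}. Two preliminary facts are needed. First, since $V\geq 0$ we have $\V^x_s\leq 1$, so $|P^V_sg|\leq P_s|g|\leq\|g\|_\infty$, i.e. $P^V_s$ is a contraction on $L^\infty(M)$; also $s\mapsto\kappa_V(s)$ is non-decreasing, so $\kappa_V(s)\leq\kappa_V(\delta)$ for $0<s\leq\delta$. Second, for $f\in C^2(M)$ with $f$ and $Hf$ bounded I claim the Duhamel-type identity
\begin{equation}\label{eq:duhamelintproof}
P^V_tf = f - \int_0^t P^V_s(Hf)\,ds,\qquad t\geq 0,
\end{equation}
which I would prove by applying Itô's formula to $s\mapsto\V^x_sf(X_s(x))$: using $d\V^x_s=-V(X_s(x))\V^x_s\,ds$ and $\tfrac12\Delta f-Vf=-Hf$ one gets $d(\V^x_sf(X_s(x)))=\V^x_s\langle\nabla f,//_s\,dB_s\rangle-\V^x_s(Hf)(X_s(x))\,ds$; stopping at the exit times $\tau_n$ of an exhausting sequence of geodesic balls makes the stochastic integral a genuine martingale (its quadratic variation is bounded on each ball since $\nabla f$ is continuous), and letting $n\to\infty$ one recovers \eqref{eq:duhamelintproof} by dominated convergence, the dominations being $\|f\|_\infty$ and $t\|Hf\|_\infty$ respectively, and $\tau_n\to\infty$ by stochastic completeness. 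Since $Hf=-\tfrac12\Delta f+Vf$ is continuous and bounded it lies in $L^\infty(M)$, so Theorem~\ref{thm:linfest} applies to it.

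For $t\geq\delta$ I would write $P^V_tf=P^V_\delta(P^V_{t-\delta}f)$ and apply Theorem~\ref{thm:linfest} at time $\delta$ to the bounded function $P^V_{t-\delta}f$, combined with the $L^\infty$-contractivity noted above:
\begin{equation}
\|dP^V_tf\|_\infty\leq e^{K^-\delta}\Big(\sqrt{\tfrac{2}{\pi\delta}}+\kappa_V(\delta)\Big)\|P^V_{t-\delta}f\|_\infty\leq e^{K^-\delta}\Big(\sqrt{\tfrac{2}{\pi\delta}}+\kappa_V(\delta)\Big)\|f\|_\infty.
\end{equation}
This is already dominated by the asserted bound.

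For $0\leq t<\delta$ I would rewrite \eqref{eq:duhamelintproof} as $P^V_tf=P^V_\delta f+\int_t^\delta P^V_s(Hf)\,ds$ and differentiate. Since by Theorem~\ref{thm:linfest} the map $s\mapsto\|dP^V_s(Hf)\|_\infty$ is $O(s^{-1/2})$ near $0$, hence integrable on $(0,\delta)$, the gradient may be taken inside the integral, giving $dP^V_tf=dP^V_\delta f+\int_t^\delta dP^V_s(Hf)\,ds$. Theorem~\ref{thm:linfest} at time $\delta$ bounds the first term by $e^{K^-\delta}(\sqrt{2/\pi\delta}+\kappa_V(\delta))\|f\|_\infty$, while for the second, using $\|dP^V_s(Hf)\|_\infty\leq e^{K^-\delta}(\sqrt{2/\pi s}+\kappa_V(\delta))\|Hf\|_\infty$ for $0<s\leq\delta$ together with $\int_0^\delta s^{-1/2}\,ds=2\sqrt\delta$ and the elementary identity $2\sqrt{2\delta/\pi}=\sqrt{8\delta/\pi}=\delta\sqrt{8/\pi\delta}$,
\begin{equation}
\Big\|\int_t^\delta dP^V_s(Hf)\,ds\Big\|_\infty\leq\int_0^\delta\|dP^V_s(Hf)\|_\infty\,ds\leq e^{K^-\delta}\,\delta\Big(\sqrt{\tfrac{8}{\pi\delta}}+\kappa_V(\delta)\Big)\|Hf\|_\infty.
\end{equation}
Adding the two contributions gives precisely the claimed estimate for $t<\delta$; since it also dominates the bound found for $t\geq\delta$, it holds for all $t\geq 0$, and uniform boundedness of $dP^Vf$ on $[0,\infty)\times M$ follows at once.

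The main obstacle is the rigorous justification of \eqref{eq:duhamelintproof} and of moving the gradient inside the time integral. The former requires confirming that the local-martingale term from Itô's formula is controlled by the stated localization — this is where the hypotheses $f\in C^2(M)$, $f,Hf$ bounded, and stochastic completeness are all used — while the latter rests on the $O(s^{-1/2})$ bound from Theorem~\ref{thm:linfest}, which is exactly why the term involving $\|Hf\|_\infty$ appears, being finite but not removable by this method.
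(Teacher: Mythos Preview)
Your proof is correct and follows essentially the same strategy as the paper: combine the forward Kolmogorov (Duhamel) identity with Theorem~\ref{thm:linfest}, then integrate the $O(s^{-1/2})$ bound over $[0,\delta]$. The paper's organization is slightly slicker: rather than splitting into $t\geq\delta$ and $t<\delta$, it uses the single identity $P^V_{t}f = P^V_{\delta+t}f + \int_0^\delta P^V_s(HP^V_tf)\,ds$, valid for all $t\geq 0$, together with $\|HP^V_tf\|_\infty = \|P^V_t(Hf)\|_\infty \leq \|Hf\|_\infty$, thereby avoiding any case distinction; but the estimates that result are identical to yours.
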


\begin{proof}
According the forward Kolmogorov equation we have
\begin{equation}
P^V_{\delta+t}f = P^V_tf - \int_0^\delta P^V_s(H P^V_tf)ds
\end{equation}
and therefore
\begin{equation}\label{eq:diffestkol}
|dP^V_tf|(x) \leq |dP^V_{\delta+t}f|(x) + \int_0^\delta |dP^V_s(H P^V_tf)|(x)ds
\end{equation}
for all $x \in M$. By Theorem \ref{thm:linfest} we have
\begin{equation}
|dP^V_{\delta+t}f|(x) \leq  e^{K^-\delta} \|P^V_tf\|_\infty \left( \sqrt{\frac{2}{\pi \delta}}  + \kappa_V(\delta) \right)\leq e^{K^-\delta} \|f\|_\infty \left(\sqrt{\frac{2}{\pi \delta}} + \kappa_V(\delta) \right)
\end{equation}
and similarly
\begin{equation}
|dP^V_s(H P^V_tf)|(x) \leq e^{K^-s} \|H f\|_\infty \left(\sqrt{\frac{2}{\pi s}}  + \kappa_V(s) \right).
\end{equation}
Consequently
\begin{equation}
|dP^V_tf|(x) \leq  e^{K^-\delta}\left(\left(\sqrt{\frac{2}{\pi \delta}}+\kappa_V(\delta)\right)\|f\|_\infty + \left(\sqrt{\frac{8\delta}{\pi}}+ \int_0^\delta \kappa_V(s)ds \right)\|H f\|_\infty\right)
\end{equation}
from which the result follows, since $\kappa_V$ is non-decreasing. 
\end{proof}

\begin{theorem}\label{thm:uniformlpest}
Suppose $V \in C^1(M)$ is non-negative with $\Ric \geq 2K$. Suppose $p>1$, set $q = p/(p-1)$ and assume $\kappa_{V,q}(\delta)<\infty$ for some $\delta>0$. Suppose $f\in C^2(M)$ with $f,Hf \in L^p(M)$. Then the derivative $dP^V f$ is uniformly $L^p$-bounded on $[0,\infty)\times M$ and moreover
\begin{align}
\|dP^V_tf\|_{p} \leq e^{K^-\delta}\left(\left(\frac{C_q^{1/q}}{\sqrt{\delta}}+\kappa_{V,q}(\delta)\right)\|f\|_{p}+ \delta\left(\frac{2C_q^{1/q}}{\sqrt{\delta}}+\kappa_{V,q}(\delta)\right)\|H f\|_{p}\right)
\end{align}
where $C_q$ is the constant from the Burkholder-Davis-Gundy inequality.
\end{theorem}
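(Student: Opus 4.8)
The plan is to repeat the argument used for Theorem~\ref{thm:uniformlinfest}, simply replacing Theorem~\ref{thm:linfest} by Theorem~\ref{thm:lpest} at each step. First I would start from the forward Kolmogorov equation
\begin{equation}
P^V_{\delta+t}f = P^V_tf - \int_0^\delta P^V_s(H P^V_tf)\,ds,
\end{equation}
which is available for $f\in C^2(M)$ with $f,Hf\in L^p(M)$, differentiate it, and apply the triangle inequality to obtain the pointwise bound
\begin{equation}
|dP^V_tf|(x) \leq |dP^V_{\delta+t}f|(x) + \int_0^\delta |dP^V_s(H P^V_tf)|(x)\,ds
\end{equation}
for all $x\in M$ and $t\geq 0$. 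Taking $L^p$ norms and passing the norm inside the time integral by Minkowski's integral inequality then yields
\begin{equation}
\|dP^V_tf\|_{p} \leq \|dP^V_{\delta+t}f\|_{p} + \int_0^\delta \|dP^V_s(H P^V_tf)\|_{p}\,ds.
\end{equation}

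To estimate the two terms on the right I would exploit the semigroup property together with the fact that $P^V_t$ is a contraction on $L^p(M)$ — which holds because $V\geq 0$ forces $|P^V_tg|\leq P_t|g|$ pointwise, and $P_t$ is $L^p$-contractive — and the commutation $HP^V_tf = P^V_t Hf$. Writing $P^V_{\delta+t}f = P^V_\delta(P^V_tf)$ and applying Theorem~\ref{thm:lpest} at time $\delta$ to the function $P^V_tf\in L^p(M)$ gives
\begin{equation}
\|dP^V_{\delta+t}f\|_{p} \leq e^{K^-\delta}\left(\frac{C_q^{1/q}}{\sqrt{\delta}} + \kappa_{V,q}(\delta)\right)\|P^V_tf\|_{p} \leq e^{K^-\delta}\left(\frac{C_q^{1/q}}{\sqrt{\delta}} + \kappa_{V,q}(\delta)\right)\|f\|_{p},
\end{equation}
while applying Theorem~\ref{thm:lpest} at time $s\in(0,\delta]$ to $P^V_tHf\in L^p(M)$ — legitimate since $\kappa_{V,q}$ is non-decreasing, so $\kappa_{V,q}(s)\leq\kappa_{V,q}(\delta)<\infty$ — gives
\begin{equation}
\|dP^V_s(H P^V_tf)\|_{p} \leq e^{K^-s}\left(\frac{C_q^{1/q}}{\sqrt{s}} + \kappa_{V,q}(s)\right)\|Hf\|_{p}.
\end{equation}

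Finally I would integrate the last inequality over $s\in(0,\delta)$, bounding $e^{K^-s}\leq e^{K^-\delta}$ and $\kappa_{V,q}(s)\leq\kappa_{V,q}(\delta)$, and using $\int_0^\delta s^{-1/2}\,ds = 2\sqrt{\delta}$, to obtain
\begin{equation}
\int_0^\delta \|dP^V_s(H P^V_tf)\|_{p}\,ds \leq e^{K^-\delta}\left(2C_q^{1/q}\sqrt{\delta} + \delta\,\kappa_{V,q}(\delta)\right)\|Hf\|_{p} = e^{K^-\delta}\,\delta\left(\frac{2C_q^{1/q}}{\sqrt{\delta}} + \kappa_{V,q}(\delta)\right)\|Hf\|_{p}.
\end{equation}
Adding the two contributions reproduces exactly the stated inequality, and since the right-hand side does not depend on $t$, this simultaneously establishes the uniform $L^p$-boundedness of $dP^Vf$ on $[0,\infty)\times M$. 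The only points that require attention — rather than genuine obstacles — are the $L^p$-contractivity of $P^V_t$, the validity of the forward Kolmogorov equation and of $HP^V_tf=P^V_tHf$ in the stated function class, and the applicability of Minkowski's integral inequality; all of these are routine once $f,Hf\in L^p(M)$ and are already implicit in the proof of Theorem~\ref{thm:uniformlinfest}, so I do not anticipate a substantive difficulty.
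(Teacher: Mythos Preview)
Your proposal is correct and follows essentially the same route as the paper: start from the forward Kolmogorov identity, pass to $L^p$ via Minkowski's integral inequality, apply Theorem~\ref{thm:lpest} to each piece, and integrate the $s^{-1/2}$ term. The paper's proof is in fact terser than yours, simply citing \eqref{eq:diffestkol}, Minkowski, and Theorem~\ref{thm:lpest} before concluding ``as in the case $p=\infty$''; your added remarks on $L^p$-contractivity and the commutation $HP^V_t f = P^V_t Hf$ make explicit what the paper leaves implicit.
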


\begin{proof}
By \eqref{eq:diffestkol} and Minkowski's inequality we have
\begin{equation}
\| dP^V_{t}f \|_{p} \leq \| dP^V_{t+\delta}f \|_{p} + \int_0^\delta \| dP^V_s(H P_t^Vf)\|_{p} ds.
\end{equation}
By Theorem \ref{thm:lpest} we have
\begin{equation}
\| dP^V_{t+\delta} f \|_{p} \leq e^{K^-\delta} \left( \frac{C_q^{1/q}}{\sqrt{\delta}} + \kappa_{V,q}(\delta)\right) \| f\|_{p}
\end{equation}
and similarly
\begin{equation}
\| dP^V_s(H P_t^Vf)\|_{p} \leq e^{K^-s} \left( \frac{C_q^{1/q}}{\sqrt{s}} + \kappa_{V,q}(s)\right) \|H f \|_{p}
\end{equation}
from which the result follows, as is the case $p=\infty$. 
\end{proof}

\section{Characterizations of Ricci curvature}\label{sec:characters}

It is well known that Ricci curvature can be characterized in terms of the gradient of the heat semigroup; see for example \cite[Theorem~2.2.4]{wangbook}. Next we show that this characterization extends to the Feynman-Kac semigroups considered above. This shows a way in which the derivative $dV$, appearing in the gradient estimates of the previous section, occurs naturally:

\begin{theorem}\label{thm:charac}
Suppose $V\in C^1(M)$ is bounded below. Let $x\in M$ and $X \in T_xM$ with $|X|=1$. Suppose $f \in C^\infty_0(M)$ with $\nabla f = X$, $\Hess f(x) = 0$ and set $\alpha := f(x)$. Then for any $p>0$ we have
\begin{equation}
\lim_{t\downarrow 0} \frac{P^V_t|\nabla f|^p(x) - |\nabla P^V_tf|^p(x)}{pt} = \frac{1}{2}\Ric(X,X) +\left(1-\frac{1}{p}\right)V + \alpha dV(X).
\end{equation}
\end{theorem}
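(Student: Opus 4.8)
The plan is to expand both terms in the numerator as $t\downarrow 0$ using the small-time asymptotics supplied by the Feynman-Kac formula and the Bismut-type derivative formula \eqref{eq:bismutloc}, then subtract. The normalization $\nabla f(x)=X$, $\Hess f(x)=0$ kills the first-order behaviour in a neighbourhood and isolates the curvature contribution; the dependence on $p$ enters only through the difference between $P^V_t(\cdot)^p$ and $(P^V_t\,\cdot\,)^p$, which is a variance-type term.

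\medskip

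\textbf{Step 1: expand $|\nabla P^V_tf|^p(x)$.} Using the local derivative formula \eqref{eq:bismutloc} with a convenient choice of $h$ (e.g. $h_s = (1-s/t)$ times the identity on a small domain, or just $h_s\equiv 1$ after a localization argument since $f$ has compact support and everything is smooth near $x$), I would write
\begin{equation}
(dP^V_tf)_x(X) = -\E\Big[\V^x_t f(X_t(x))\Big(\tfrac1t\int_0^t\langle Q_s\,X,dB_s\rangle + \int_0^t (dV)_{X_s}(//_sQ_s X)\,\tfrac{t-s}{t}\,ds\Big)\Big],
\end{equation}
and then Taylor-expand the three ingredients in $t$: $\V^x_t = 1 - tV(x) + o(t)$; $f(X_t(x)) = f(x) + (df)_x(X_t(x)-x) + \tfrac12\Hess f(x)(\cdots) + \cdots$, where the martingale part of $f(X_t)-f(x)$ contributes via Itô, and $\Hess f(x)=0$ removes the deterministic $O(t)$ piece coming from $\tfrac12\Delta f(x)$; the damped transport satisfies $Q_s = \id - \tfrac12\Ric^\sharp s + o(s)$; and $(dV)_{X_s}(//_sQ_sX) = dV(X) + o(1)$. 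The stochastic integral $\int_0^t\langle Q_sX,dB_s\rangle$ pairs with the martingale part $\int_0^t (df)(//_sdB_s)$ of $f(X_t)$, and the quadratic covariation produces the leading term $\tfrac{1}{t}\int_0^t \langle Q_sX, \nabla f\rangle\,ds \to |X|^2 = 1$ at order $1$, with the curvature correction $-\tfrac12\Ric(X,X)$ appearing at order $t$ from $Q_s\approx \id - \tfrac12\Ric^\sharp s$. After collecting terms, $(dP^V_tf)_x(X) = \alpha\big(1 - tV(x)\big) + t\cdot(\text{curvature and }dV\text{ terms}) + o(t)$; more precisely I expect $|\nabla P^V_tf|(x) = |\alpha| - t\big(\tfrac12\Ric(X,X)\,\mathrm{sgn}(\alpha) + \cdots\big) + o(t)$, so raising to the $p$th power and dividing by $pt$ gives $|\alpha|^{p-1}\,\mathrm{sgn}(\alpha)\cdot(\cdots)$. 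Care is needed to track signs and to handle the case $\alpha = 0$ separately (there the left side is more delicate, but the statement fixes $f(x)=\alpha$ as a free parameter so one may assume $\alpha\neq 0$ and argue by continuity, or note the formula is polynomial in $\alpha$).

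\medskip

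\textbf{Step 2: expand $P^V_t|\nabla f|^p(x)$.} This is the easier term: by the Feynman-Kac formula, $P^V_t|\nabla f|^p(x) = \E[\V^x_t |\nabla f|^p(X_t(x))]$, and since $|\nabla f|^p$ is smooth near $x$ with value $1$ there, Itô's formula gives $P^V_t g(x) = g(x) + t\big(\tfrac12\Delta g(x) - V(x)g(x)\big) + o(t)$ for $g=|\nabla f|^p$. Computing $\Delta(|\nabla f|^p)$ at $x$ via the Bochner formula: $\tfrac12\Delta|\nabla f|^2 = |\Hess f|^2 + \langle\nabla f,\nabla\Delta f\rangle + \Ric(\nabla f,\nabla f)$, which at $x$ reduces (using $\Hess f(x)=0$) to $\Ric(X,X) + \langle X,\nabla\Delta f\rangle$; then the chain rule for $(\cdot)^{p/2}$ applied to $|\nabla f|^2$ converts this, again using $\Hess f(x)=0$ so that $\nabla|\nabla f|^2(x)=0$, into $\tfrac12\Delta|\nabla f|^p(x) = \tfrac{p}{2}\big(\Ric(X,X)+\langle X,\nabla\Delta f\rangle\big)$. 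So $P^V_t|\nabla f|^p(x) = 1 + t\big(\tfrac{p}{2}\Ric(X,X) + \tfrac{p}{2}\langle X,\nabla\Delta f\rangle - V(x)\big) + o(t)$.

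\medskip

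\textbf{Step 3: subtract and identify the limit.} Forming $\big(P^V_t|\nabla f|^p - |\nabla P^V_tf|^p\big)/(pt)$ and letting $t\downarrow 0$, the $\tfrac12\Ric(X,X)$ survives (it appears with coefficient $\tfrac p2$ in Step 2 and with a matching-but-reduced coefficient in Step 1, leaving $\tfrac12\Ric(X,X)$ after dividing by $p$ — this is exactly the classical computation for the heat semigroup, cf. \cite[Theorem~2.2.4]{wangbook}), the $\langle X,\nabla\Delta f\rangle$ terms must cancel between the two expansions (they do in the $V\equiv 0$ case and the $V$-dependent corrections to Step 1's martingale term do not reintroduce them), the potential contributes $-V/p$ from Step 2 and $+V$ from the $\V^x_t$ factor in Step 1 interacting with $(\cdot)^p$, combining to $(1-\tfrac1p)V$, and the genuinely new term $\alpha\,dV(X)$ comes from the $\int_0^t (dV)(//_sQ_sX)\tfrac{t-s}{t}ds \to \tfrac12 dV(X)$ piece in \eqref{eq:bismutloc} multiplied by the $f(x)=\alpha$ factor and then doubled by the structure of the $p$th-power expansion.

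\medskip

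\textbf{Main obstacle.} The delicate point is Step 1: rigorously justifying the term-by-term Taylor expansion of the expectation in \eqref{eq:bismutloc} — in particular interchanging limit and expectation, controlling the $o(t)$ remainders uniformly (which is where $V\in C^1$, $V$ bounded below, and the compact support of $f$ are used, together with the local nature of \eqref{eq:bismutloc} on a regular domain $D\ni x$), and correctly pairing the stochastic integral against the martingale part of $f(X_t(x))$ so that the $|X|^2=1$ normalization and the $-\tfrac12\Ric(X,X)$ correction emerge with the right constants and signs. Keeping track of which contributions are $O(\sqrt t)$ in fluctuation but $O(t)$ in mean, and verifying the cancellation of the $\nabla\Delta f$ terms, will be the bulk of the work.
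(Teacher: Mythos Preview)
Your Step~2 matches the paper's treatment of the first term and is correct. The trouble is entirely in Step~1, in two respects.

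First, a concrete error: you write $(dP^V_tf)_x(X) = \alpha(1 - tV(x)) + \cdots$ and hence $|\nabla P^V_tf|(x) = |\alpha| + O(t)$, but the leading term is $1$, not $\alpha$ --- indeed $\nabla P^V_0 f = \nabla f$ and $|\nabla f|(x)=|X|=1$. You even identify this yourself via the covariation pairing (``$\to |X|^2 = 1$ at order $1$'') and then contradict it two lines later. The conflation of $f(x)=\alpha$ with $|\nabla f|(x)=1$ propagates into the $|\alpha|^{p-1}\mathrm{sgn}(\alpha)$ factors and the spurious worry about the case $\alpha=0$; carried through, it gives the wrong answer.

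Second, and more to the point, Step~1 is far harder than necessary and never actually produces an explicit order-$t$ coefficient to compare against Step~2. The paper avoids the Bismut formula here entirely: since $\partial_t P^V_tf|_{t=0} = (\tfrac12\Delta - V)f$, one has directly at $x$
\[
|\nabla P^V_tf|^p = |\nabla f|^p + pt\,|\nabla f|^{p-2}\bigl\langle \nabla(\tfrac12\Delta - V)f,\nabla f\bigr\rangle + o(t),
\]
which needs only $f\in C^\infty_0$ and $V\in C^1$. Subtracting this from the Step~2 expansion, the $\langle X,\nabla\Delta f\rangle$ contributions cancel by inspection, and $\langle X,\nabla(Vf)\rangle(x) = V(x)|X|^2 + f(x)\,dV(X) = V + \alpha\,dV(X)$ delivers the potential terms immediately. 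No stochastic Taylor expansion, no martingale pairing, no uniform remainder control: the whole argument is the Bochner identity plus a few lines of calculus. Your route through \eqref{eq:bismutloc} could in principle be repaired once the $\alpha\leftrightarrow 1$ confusion is fixed, but you would still have to extract the full order-$t$ coefficient from the stochastic expansion (the $Q_s$ curvature correction, the drift $\tfrac12\Delta f$ in $f(X_t)$, the $\V^x_t$ factor, and the $dV$ integral simultaneously), which is considerably more work for no gain.
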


\begin{proof}
By Taylor expansion at the point $x$ we have
\begin{equation}
P^V_t|\nabla f|^p = |\nabla f|^p + t(\tfrac{1}{2}\Delta - V)|\nabla f|^p + o(t)
\end{equation}
and also
\begin{equation}
|\nabla P_t^Vf|^p = |\nabla f|^p + pt|\nabla f|^{p-2} \langle \nabla (\tfrac{1}{2}\Delta - V)f,\nabla f\rangle + o(t)
\end{equation}
for small $t >0$. Furthermore at the point $x$ we have
\begin{equation}
(\tfrac{1}{2}\Delta-V)|\nabla f|^{p} = \frac{p}{2}|\nabla f|^{p-2} \left(\frac{1}{2}\Delta - \frac{2}{p}V\right)|\nabla f|^2 + \frac{p}{2}\left(\frac{p}{2}-1\right) |\nabla f|^{p-2} |\nabla |\nabla f|^2|^2
\end{equation}
with $|\nabla |\nabla f|^2|^2(x) = |2 (\Hess f(x))^\sharp(\nabla f)|^2 = 0$, and by the Bochner formula
\begin{equation}
\frac{1}{2}\Delta |\nabla f|^2 = \langle \nabla \Delta f,\nabla f \rangle + \Ric(\nabla f,\nabla f)
\end{equation}
so therefore
\begin{align}
\lim_{t\downarrow 0} \frac{P^V_t|\nabla f|^p - |\nabla P^V_tf|^p}{pt} =\text{ }& \frac{1}{2}\Ric(X,X) -\frac{1}{p}V + \langle \nabla (Vf),\nabla f\rangle\\
=\text{ }&\frac{1}{2}\Ric(X,X) +\left(1-\frac{1}{p}\right)V + \alpha dV(X)
\end{align}
as required. 
\end{proof}

In the Section \ref{sec:gradests} we considered uniform estimates on $dP^Vf$ which involved either the supremum norm of $f$ and a constant which diverges for small time, or the supremum norms of both $f$ and $Hf$ and a constant which does not. In light of the previous theorem, we would also like to consider derivative estimates for functions belonging to $C_b^1(M)$. To do so we use the following derivative formula:

\begin{theorem}\label{thm:derivform}
Suppose $V \in C^1(M)$ is bounded below with $\kappa_{V}(t)<\infty$. Suppose the Ricci curvature of $M$ is bounded below with $f\in C_b^1(M)$. Then
\begin{equation}\label{eq:derivformeq}
(dP^V_tf)(v) = \E\left[ \V_t^x ((df)_{X_t(x)}(//_tQ_t v) -f(X_t(x))\int_0^t (dV)_{X_s(x)}(//_s Q_s v)ds)\right]
\end{equation}
for all $x \in M$ and $v \in T_xM$.
\end{theorem}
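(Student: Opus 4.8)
The plan is to obtain formula \eqref{eq:derivformeq} as a limit of the localized Bismut formula \eqref{eq:bismutloc}, by exhausting $M$ with regular domains and choosing a convenient family of processes $h$. First I would fix $x \in M$, $v \in T_xM$ and $t>0$, pick an exhaustion $D_1 \subset D_2 \subset \cdots$ of $M$ by regular domains with $x \in D_1$, and let $\tau_k$ be the first exit time of $X(x)$ from $D_k$. For each $k$ I would choose the process $h^k_s = (1 - s/(\tau_k \wedge t))^+$, which satisfies the requirements in \eqref{eq:bismutloc}: it is bounded and adapted, $h^k_0 = 1$, $h^k_s = 0$ for $s \geq \tau_k \wedge t$, and $\E[\int_0^{\tau_k \wedge t}|\dot h^k_s|^2 ds] = \E[(\tau_k \wedge t)^{-1}] \cdot$ (something finite, after noting $\tau_k$ is bounded below away from $0$ for $X$ started at $x \in D_k$, or more carefully by a stopping argument); applying \eqref{eq:bismutloc} with this $h$ and pairing with $v$ rather than an arbitrary function gives a local formula for $(dP^V_t f)(v)$ as an expectation of $\V^x_t f(X_t(x))$ against $-\int_0^{\tau_k \wedge t}(\tau_k \wedge t)^{-1}\langle Q_s v, dB_s\rangle + (\text{something})$. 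Rather than fight with the singular $(\tau_k\wedge t)^{-1}$, the cleaner route — and the one the excerpt hints at via \eqref{eq:bismut} — is to first integrate \eqref{eq:bismutloc} against $\frac1t$ over a uniform $h$, i.e. to use the "time-averaged" choice $h_s = (t-s)/t$ on $[0,t]$ directly when $\tau_k > t$, and handle the event $\{\tau_k \leq t\}$ as an error term that vanishes as $k \to \infty$ by stochastic completeness. This yields, for each $k$,
\begin{equation}
(dP^V_{t}f)^{D_k}(v) = \frac1t\,\E\left[\mathbf{1}_{\{\tau_k > t\}}\,\V^x_t f(X_t(x))\left(\int_0^t \langle Q_s v, dB_s\rangle - \int_0^t (t-s)\,(dV)_{X_s(x)}(//_s Q_s v)\,ds\right)\right] + R_k,
\end{equation}
where $(dP^V_t f)^{D_k}$ is the derivative of the Dirichlet-type semigroup on $D_k$ and $R_k$ collects the boundary contributions.

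The second step is to justify passing to the limit $k \to \infty$ on both sides. On the left, $(dP^V_t f)^{D_k}(v) \to (dP^V_t f)(v)$: this is where I would invoke, as the excerpt says, the argument of \cite[Theorem~7]{hessianpaperone} — under the standing hypotheses (Ricci bounded below and $\kappa_V(t) < \infty$), formula \eqref{eq:bismutloc} combined with the $L^\infty$ estimate of Theorem \ref{thm:linfest} shows $dP^V f$ is uniformly bounded on $[\epsilon, t]\times M$, and the Dirichlet approximations converge locally uniformly. On the right, I would show $R_k \to 0$ using $\E[\mathbf{1}_{\{\tau_k \leq t\}}] \to 0$ (geodesic and stochastic completeness) together with uniform integrability of the bracketed factor: $\V^x_t \leq 1$ since $V \geq 0$ (WLOG, as stated before Theorem \ref{thm:linfest}), $f$ is bounded, $\int_0^t \langle Q_s v, dB_s\rangle$ is Gaussian with variance $\leq |v|^2 t e^{2K^- t}$ hence has all moments, and the $dV$ term is controlled in $L^1$ by $\kappa_V(t)$ (and in fact $dP^V f$ bounded lets one bound it uniformly). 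Then the dominated/uniform-integrability argument replaces $\mathbf{1}_{\{\tau_k > t\}}$ by $1$ in the main term, giving exactly formula \eqref{eq:bismut}.

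The third and final step is to convert \eqref{eq:bismut} into the claimed form \eqref{eq:derivformeq} by an integration by parts in time, moving the $\frac1t\int_0^t$ inside. Write $N_s := \int_0^s \langle Q_r v, dB_r\rangle + \int_0^s (dV)_{X_r(x)}(//_r Q_r v)\,dr$ — no, more precisely I would integrate by parts on the martingale part using the identity $\frac1t\int_0^t \langle Q_s v, dB_s\rangle = \frac1t\int_0^t (t-s)\,d\langle Q_\cdot v, dB_\cdot\rangle_s$ rewritten via Fubini/stochastic Fubini so that the combination $\int_0^t \langle Q_s v, dB_s\rangle - \int_0^t(t-s)(dV)(//_s Q_s v)ds$, divided by $t$, telescopes against the terminal values. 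Concretely, applying Itô's formula to $s \mapsto (t-s) \cdot (\text{a suitable process})$ and taking expectations — using that the damped parallel transport satisfies $(df)(//_s Q_s v)$ evolves with generator compensating the $Q$-equation — turns $\frac1t\E[\V^x_t f(X_t(x))\int_0^t\langle Q_s v, dB_s\rangle]$ into $\E[\V^x_t (df)_{X_t(x)}(//_t Q_t v)]$ plus a term that exactly cancels the $\frac1t\int_0^t (t-s)$-weighted $dV$ contribution against the unweighted one in \eqref{eq:derivformeq}. This is the standard manoeuvre (cf. Thalmaier–Wang) by which the Bismut integral formula is recognised as the "filtering" of the pathwise derivative formula. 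I expect this last bookkeeping step — keeping the $\V^x_t$ factor, the $Q$-dynamics, and the two $dV$ terms aligned so the weights match — to be the main obstacle, though it is routine in spirit; the other genuinely delicate point is the uniform integrability needed to discard $R_k$, which rests on the a priori boundedness of $dP^V f$ from Theorem \ref{thm:linfest}.
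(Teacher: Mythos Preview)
Your plan takes an unnecessary detour and leaves a real gap at the final step. The paper does not derive \eqref{eq:derivformeq} from the Bismut formula \eqref{eq:bismut}; it proceeds directly. For $f\in C^\infty_0(M)$ one sets $f_s:=P^V_{t-s}f$ and $N_s(v):=(df_s)_{X_s(x)}(//_sQ_sv)$, and uses It\^o's formula together with the Weitzenb\"ock identity $d\Delta f=\tr\nabla^2 df-df(\Ric^\sharp)$ to check that
\[
\V_s^x\,(dP^V_{t-s}f)_{X_s(x)}(//_sQ_s v)-\int_0^s \V_r^x\,P^V_{t-r}f(X_r(x))\,(dV)_{X_r(x)}(//_rQ_rv)\,dr
\]
is a local martingale. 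The a priori bound of Theorem~\ref{thm:uniformlinfest} (this is the only place the Bismut-type estimates enter, and only as a bound on $\|dP^Vf\|_{L^\infty([0,t]\times M)}$) shows that its supremum over $[0,t]$ is integrable, hence it is a true martingale; evaluating at $s=0$ and $s=t$ gives \eqref{eq:derivformeq} for $f\in C^\infty_0(M)$, and one then extends to $C^1_b(M)$ by approximation.

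Your step~3 is not the ``routine bookkeeping'' you suggest. Converting \eqref{eq:bismut} into \eqref{eq:derivformeq} amounts to proving the identity
\[
\tfrac{1}{t}\E\Big[\V^x_t f(X_t)\int_0^t\langle Q_sv,dB_s\rangle\Big]
=\E\big[\V^x_t\,(df)_{X_t}(//_tQ_tv)\big]
+\E\Big[\V^x_t f(X_t)\Big(\tfrac{1}{t}\!\int_0^t(t-s)\,dV(//_sQ_sv)\,ds-\int_0^t dV(//_sQ_sv)\,ds\Big)\Big],
\]
and this cannot be obtained by It\^o's formula applied to $(t-s)\cdot(\text{something not involving }dP^V_{t-s}f)$: expanding $d(\V^x_sf(X_s))$ introduces $Hf$, and tracking the covariation with $\int_0^\cdot\langle Q_rv,dB_r\rangle$ leads you straight back to needing $dP^V_{t-s}f$ along the path. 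Making your step~3 precise therefore requires either a Malliavin integration-by-parts on path space (which you have not set up, and which needs care with the $\V^x_t$ weight), or precisely the martingale the paper constructs --- at which point your steps~1--2, which merely re-derive \eqref{eq:bismut} already established in the text, become redundant.
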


\begin{proof}
First suppose $f \in C^\infty_0(M)$. Setting $f_s:= P^V_{t-s}f$ and $N_s(v) := dP^V_{t-s}f(//_s Q_s v)$, using the definition of $Q$ as the solution to equation \eqref{eq:eqforQ} and by It\^{o}'s formula and the Weitzenb\"{o}ck formula
\begin{equation}
d \Delta f = \tr \nabla^2 df - df({\Ric}^\sharp)
\end{equation}
we see
\begin{equation}
\begin{split}
dN_s(v) \stackrel{m}{=}\text{ }& df_s(//_s \partial_s Q_sv)ds+(\partial_s df_s)(//_s Q_sv)dt + \frac{1}{2}\tr \nabla^2 (df_s)(//_s Q_sv)ds \\
=\text{ }& V(X_s(x)) N_s(v) ds +f_s(X_s(x)) dV(//_s Q_sv)ds
\end{split}
\end{equation}
where $\stackrel{m}{=}$ denotes equality modulo the differential of a local martingale. It follows that
\begin{equation}
d(\V^x_s N_s(v) ) \stackrel{m}{=} \V^x_s f_s(X_s(x)) dV(//_s Q_sv)ds
\end{equation}
so that
\begin{equation}\label{eq:locmartderform}
\V_s^x (dP^V_{t-s}f)_{X_s(x)}(//_sQ_s v)- \int_0^s \V_r^x P^V_{t-r}f(X_r(x))(dV)_{X_r(x)}(//_rQ_rv)dr
\end{equation}
is a local martingale. To verify that it is a true martingale, we see that
\begin{align}
& \E\left[ \sup_{s\in[0,t]} \left(\V_s^x (dP^V_{t-s}f)_{X_s(x)}(//_sQ_s v)- \int_0^s \V_r^x P^V_{t-r}f(X_r(x)) (dV)_{X_r(x)}(//_rQ_rv)dr\right)\right]\\[5pt]
& \quad \quad \quad \leq \text{ } e^{K^-t} |v| \left(\|dP^Vf\|_{L^\infty([0,t]\times M)} + \|f\|_\infty \kappa_V(t)\right)
\end{align}
which is finite, by Theorem \ref{thm:uniformlinfest}. Formula \eqref{eq:derivformeq} follows by evaluating the expected value of \eqref{eq:locmartderform} at times $0$ and $t$, extending to all $f \in C_b^1(M)$ by approximation. 
\end{proof}

Combining Theorems \ref{thm:charac} and \ref{thm:derivform}, we obtain the following equivalence:

\begin{theorem}\label{thm:gradestbas}
Suppose $V\in C^1(M)$ is bounded below with $\nabla V$ bounded. Let $K \in \mathbb{R}$. Then the following are equivalent:
\begin{itemize}
\item[\normalfont{(1)}] $\Ric \geq 2K$;
\item[\normalfont{(2)}] if $f\in C_b^1(M)$ then
\begin{equation}\label{eq:finalgradest} 
|\nabla P^V_tf| \leq e^{-Kt}P^V_t|\nabla f| + \|\nabla V\|_\infty\left(\frac{1-e^{-Kt}}{K}\right)P_t^V |f|
\end{equation}
for all $t \geq 0$.
\end{itemize}
\end{theorem}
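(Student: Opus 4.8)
The plan is to prove the two implications separately, using the derivative formula of Theorem~\ref{thm:derivform} for $(1)\Rightarrow(2)$ and the short-time expansion of Theorem~\ref{thm:charac} for $(2)\Rightarrow(1)$. I note first that the hypothesis that $\nabla V$ is bounded gives $|dV|\le\|\nabla V\|_\infty$ everywhere, hence $\kappa_V(t)\le t\|\nabla V\|_\infty<\infty$ for all $t\ge 0$; so whenever $\Ric\ge 2K$ holds, all the hypotheses of Theorem~\ref{thm:derivform} are satisfied and formula \eqref{eq:derivformeq} is available for every $f\in C_b^1(M)$.

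For $(1)\Rightarrow(2)$, the key auxiliary fact is that $\Ric\ge 2K$ forces the damped-transport coefficient to satisfy $\|Q_s\|\le e^{-Ks}$. Indeed, differentiating $|Q_sv|^2$ using \eqref{eq:eqforQ} and the fact that $//_s$ is an isometry gives $\tfrac{d}{ds}|Q_sv|^2=-\Ric(//_sQ_sv,//_sQ_sv)\le-2K|Q_sv|^2$, and Gr\"onwall's lemma yields the claim. Substituting into \eqref{eq:derivformeq}, for a unit vector $v\in T_xM$ I bound the first term inside the expectation by $e^{-Kt}\,|\nabla f|(X_t(x))$, and the time integral in the second term by $\|\nabla V\|_\infty\,|f|(X_t(x))\int_0^t e^{-Ks}\,ds=\|\nabla V\|_\infty\,|f|(X_t(x))\,\tfrac{1-e^{-Kt}}{K}$. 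Crucially, the weight $\V_t^x$ remains attached to both $f(X_t(x))$ and the time integral, so taking expectations reproduces exactly $P^V_t|\nabla f|(x)$ and $P^V_t|f|(x)$; taking the supremum over unit $v$ gives \eqref{eq:finalgradest}, the case $t=0$ (and the convention $\tfrac{1-e^{-Kt}}{K}=t$ when $K=0$) being trivial.

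For $(2)\Rightarrow(1)$, fix $x\in M$ and a unit vector $X\in T_xM$, and choose $f\in C_0^\infty(M)$ with $f(x)=0$, $\nabla f(x)=X$ and $\Hess f(x)=0$ (such $f$ exists: truncate the function $y\mapsto\langle X,\exp_x^{-1}y\rangle$ using a cutoff in a normal chart). Rearranging \eqref{eq:finalgradest} and dividing by $t>0$,
\[
\frac{P^V_t|\nabla f|(x)-|\nabla P^V_tf|(x)}{t}\ \ge\ \frac{1-e^{-Kt}}{t}\,P^V_t|\nabla f|(x)-\|\nabla V\|_\infty\,\frac{1-e^{-Kt}}{Kt}\,P^V_t|f|(x).
\]
Letting $t\downarrow 0$, Theorem~\ref{thm:charac} with $p=1$ and $\alpha=f(x)=0$ shows the left-hand side converges to $\tfrac12\Ric(X,X)$, while on the right-hand side $\tfrac{1-e^{-Kt}}{t}\to K$, $P^V_t|\nabla f|(x)\to|X|=1$ and $P^V_t|f|(x)\to|f(x)|=0$, so the right-hand side converges to $K$. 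Hence $\Ric(X,X)\ge 2K$, and as $x$ and $X$ were arbitrary, $\Ric\ge 2K$.

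I expect the only genuinely delicate points to be the contraction estimate $\|Q_s\|\le e^{-Ks}$ — a short Gr\"onwall argument, but it is precisely what converts the curvature hypothesis into the exponential factor — together with the bookkeeping that keeps the Feynman–Kac weight correctly attached in the forward direction, and, in the converse, the observation that taking the test function with $f(x)=0$ simultaneously kills the $\alpha\,dV(X)$ term produced by Theorem~\ref{thm:charac} and the $\|\nabla V\|_\infty P^V_t|f|$ term coming from the gradient estimate; without this choice one is left with $\alpha$-dependent error terms that must be sent to zero separately. Everything else is routine.
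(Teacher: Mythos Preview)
Your proof is correct and follows essentially the same approach as the paper: $(1)\Rightarrow(2)$ is obtained from the derivative formula of Theorem~\ref{thm:derivform} together with the contraction estimate $\|Q_s\|\le e^{-Ks}$, and $(2)\Rightarrow(1)$ by applying Theorem~\ref{thm:charac} with $p=1$ to a test function satisfying $f(x)=0$, $\nabla f(x)=X$, $\Hess f(x)=0$. You have simply made explicit the details that the paper leaves to the reader (the Gr\"onwall argument for $Q$, the verification of $\kappa_V(t)<\infty$, and the limits as $t\downarrow 0$).
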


\begin{proof}
That (1) implies (2) follows immediately from Theorem \ref{thm:derivform}. To prove that (2) implies (1), suppose $x \in M$ and $X \in T_xM$ with $|X|=1$ and choose $f \in C^\infty_0(M)$ so that $f(x)=0$, $\nabla f = X$ and $\Hess f(x) = 0$. Then, by applying Theorem \ref{thm:charac} to the case $p=1$, we obtain
\begin{align}
\frac{1}{2}\Ric(X,X) =\text{ }& \lim_{t\downarrow 0} \frac{P^V_t|\nabla f|(x) - |\nabla P^V_tf|(x)}{t}\\
\geq \text{ }& \lim_{t\downarrow 0} \left(\frac{1-e^{-Kt}}{Kt}\right)\left( K P^V_t|\nabla f|(x) - \|\nabla V\|_\infty P_t^V |f|(x)\right)\\
= \text{ }& K |\nabla f|(x) - \|\nabla V\|_\infty |f(x)|\\
=\text{ }& K
\end{align}
as required. 
\end{proof}

For the case $V=0$ it is well known that $\Ric \geq 2K$ if and only if \eqref{eq:finalgradest} holds. So in fact $\Ric \geq 2K$ so long as \eqref{eq:finalgradest} holds for \textit{some} $V\in C^1(M)$ which is bounded below with $\nabla V$ bounded. Moreover, if \eqref{eq:finalgradest} holds for \textit{some} such $V$ then it must hold for \textit{all} such $V$.

\section{Harnack inequalities}\label{sec:harn}

Denoting by $\rho$ the Riemannian distance on $M$, we have also the following Harnack inequality for $P^V_tf$, the proof of which is based on that for the $V=0$ case \cite[Theorem~2.3.4]{wangbook}:

\begin{theorem}\label{thm:harnack}
Suppose $V\in C^1(M)$ is non-negative with $\nabla V$ bounded and $\Ric \geq 2K$. Then for all bounded measurable functions $f \geq 0$ and $p>1$ we have
\begin{equation}
(P^V_tf)^p(x) \leq (P^V_tf^p)(y)\exp\left[\frac{p\rho^2(x,y)}{2(p-1)C_1(t,K)t} +\frac{t\rho(x,y)\|\nabla V\|_\infty}{2C_2(t,K)}\right]
\end{equation}
for all $x,y \in M$ and $t> 0$, where
\begin{equation}
C_1(t,K) := \frac{e^{2Kt}-1}{2Kt}, \quad C_2(t,K) := \frac{Kt}{2} \coth\left(\frac{Kt}{2}\right).
\end{equation}
\end{theorem}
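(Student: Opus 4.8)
The plan is to adapt the standard coupling/Bismut-derivative argument for Harnack inequalities (as in \cite[Theorem~2.3.4]{wangbook}) to the Feynman-Kac setting, using the derivative estimate \eqref{eq:finalgradest} from Theorem \ref{thm:gradestbas} as the essential input. First I would fix $x, y \in M$ and a minimal geodesic $\gamma:[0,1]\to M$ from $x$ to $y$, parametrised proportionally to arclength, so that $|\dot\gamma| = \rho(x,y)$. For a bounded measurable $f \geq 0$ and $p > 1$, I would consider the function
\begin{equation}
\phi(s) := \log \left( P^V_{t}\big((P^V_{s \cdot 0}\ldots)\big)\right)
\end{equation}
— more precisely, the standard interpolation $\phi(s) := \log\big( (P^V_t f^{\,\beta(s)})(\gamma(s))\big)$ where $\beta$ is an increasing function with $\beta(0) = 1$ and $\beta(1) = p$ chosen so that the derivative of $\phi$ can be controlled; alternatively, and more transparently, I would follow Wang's original semigroup approach and set $\phi(s) := \log\big( (P^V_t f^{\,p})(\gamma(s))\big)^{\theta(s)}$ for a suitable profile, but the cleanest route here is the one-parameter family $\phi(s) = \frac{1}{\beta(s)}\log (P^V_t f^{\beta(s)})(\gamma(s))$ with $\beta$ interpolating from $1$ at $s=0$ to $p$ at $s=1$.

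The key computation is differentiating $\phi$ in $s$ and showing $\phi'(s) \geq -(\text{explicit integrable bound})$, so that integrating from $0$ to $1$ gives the result. The two contributions to $\phi'(s)$ are: (i) the spatial term $\langle \nabla \log(P^V_t f^{\beta(s)}), \dot\gamma(s)\rangle$, which by \eqref{eq:finalgradest} and the elementary inequality $P^V_t g \leq P_t g \cdot \|\ldots\|$ — actually, since $V \geq 0$ we have $P^V_t |g| \leq P_t|g|$, but more to the point we bound $|\nabla P^V_t f^{\beta}| / (P^V_t f^\beta)$ using \eqref{eq:finalgradest} with $e^{-Kt} \leq $ the appropriate factor and a Cauchy-Schwarz/Young step of the form $|\nabla f^\beta| = \beta f^{\beta-1}|\nabla f|$ followed by the Jensen-type bound $P^V_t(f^{\beta-1}|\nabla f|) \leq (P^V_t f^\beta)^{(\beta-1)/\beta}(P^V_t |\nabla f|^\beta)^{1/\beta}$ — hmm, this needs care; and (ii) the term from differentiating the exponent $\beta(s)$, namely $-\frac{\beta'(s)}{\beta(s)^2}\log(P^V_t f^{\beta(s)}) + \frac{\beta'(s)}{\beta(s)}\cdot \frac{P^V_t(f^{\beta(s)}\log f)}{P^V_t f^{\beta(s)}}$, which by the entropy inequality $u\log u - u\log(P^V_t\ldots)$ combines with part of (i) to produce a manageable quantity. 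The standard trick is that the $\beta'$-term produces a negative definite entropy-like contribution that absorbs the square coming from completing the square in the $|\nabla \cdot|$ term, leaving only the geodesic-length cost $\frac{|\dot\gamma|^2}{4\int\cdots}$ and the extra $\|\nabla V\|_\infty$ drift term.

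Concretely, after completing the square the spatial term contributes something like $\frac{C_1(t,K)\,t}{2}\cdot\frac{|\nabla P^V_t f^\beta|^2}{(P^V_t f^\beta)^2}\cdot(\text{const})$ matched against an entropy term, and the residual is $\frac{|\dot\gamma(s)|^2}{2 C_1(t,K)t}\cdot\frac{\beta'(s)}{\text{something}}$ plus $\frac{\|\nabla V\|_\infty\,t}{2C_2(t,K)}|\dot\gamma(s)|$ coming from the second term in \eqref{eq:finalgradest} (the factor $C_2(t,K) = \frac{Kt}{2}\coth(\frac{Kt}{2})$ should arise from the time-integral $\int_0^t e^{-Ks}\,\frac{1-e^{-K(t-s)}}{K}\,ds$ or a similar bookkeeping of the two factors $e^{-Kt}$ and $\frac{1-e^{-Kt}}{K}$ appearing in \eqref{eq:finalgradest} once one optimises the interpolation profile). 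Choosing $\beta(s)$ so that $\int_0^1 \frac{|\dot\gamma(s)|^2\beta'(s)}{(\cdots)}ds$ evaluates to $\frac{p\rho^2(x,y)}{(p-1)}$ and integrating the $V$-term over $s\in[0,1]$ (which just multiplies by $\rho(x,y)$ since $|\dot\gamma|$ is constant) yields the stated bound. The main obstacle, and where I would spend the most care, is the nonlinear term arising from the exponent $\beta$ varying: one must verify that the entropy contribution $\mathrm{Ent}$-type quantity exactly dominates the completed square, which requires the precise identity $P^V_t(g\log g)\geq P^V_t g\cdot \log P^V_t g$ — but this Jensen inequality \emph{fails} for the sub-Markovian $P^V_t$ when $V\neq 0$, so the correct move is to work with the normalised kernel or, as Wang does, to rewrite everything in terms of $\log(P^V_t f^p/(P^V_t\mathbf{1}))$ or simply to absorb the defect of $P^V_t$ into an additional nonnegative term that only helps the inequality (since $f\geq 0$ and we want an upper bound on $(P^V_tf)^p$). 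Getting this normalisation bookkeeping right, and confirming it produces exactly $C_1$ and $C_2$ rather than weaker constants, is the crux.
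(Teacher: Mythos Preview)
Your proposal takes a different interpolation from the paper and, as you yourself flag, runs into a genuine obstruction. You vary the \emph{exponent} inside a fixed-time semigroup, setting $\phi(s)=\tfrac{1}{\beta(s)}\log(P^V_t f^{\beta(s)})(\gamma(s))$ with $\beta(0)=1$, $\beta(1)=p$. Differentiating then produces an entropy term $P^V_t(g\log g)-P^V_t g\,\log P^V_t g$ that must dominate the square coming from the spatial derivative. For the Markovian $P_t$ this is Jensen's inequality and works; for the sub-Markovian $P^V_t$ with $V\geq 0$ it does not hold in general, and your proposed fixes (normalising by $P^V_t\mathbf{1}$, or hoping the defect has the right sign) do not obviously go through without further loss that would spoil the constants $C_1,C_2$. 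In addition, since your $P^V_t$ is at fixed time $t$, the factors $e^{-Ks}$ and $(1-e^{-Ks})/K$ from \eqref{eq:finalgradest} never get integrated in $s$, so it is unclear how $C_2(t,K)=\tfrac{Kt}{2}\coth(\tfrac{Kt}{2})$ would emerge.

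The paper instead uses the \emph{time-split} interpolation
\[
\phi(s)=\log P^V_s\big((P^V_{t-s}f)^p\big)(y_s),\qquad y_s=\gamma_{h(s)},\quad h(s)=t\,\frac{e^{2Ks}-1}{e^{2Kt}-1},
\]
with $\gamma:[0,t]\to M$ the geodesic. Differentiating in $s$ brings in the generator twice: once from $P^V_s$ and once from $P^V_{t-s}$. The combination
\[
\tfrac12\Delta(P^V_{t-s}f)^p-p(P^V_{t-s}f)^{p-1}\tfrac12\Delta P^V_{t-s}f=\tfrac{p(p-1)}{2}(P^V_{t-s}f)^{p-2}|\nabla P^V_{t-s}f|^2
\]
is the crucial nonnegative gain (no entropy inequality needed, and the $V$-terms combine to the harmless $(p-1)V\geq 0$). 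The gradient estimate \eqref{eq:finalgradest} is then applied to $P^V_s(\cdot)$ at the \emph{running} time $s$; after completing the square against the gain term, one is left with $-\tfrac{p\rho^2 h'(s)^2 e^{-2Ks}}{2(p-1)t^2}$ and $-\tfrac{\rho}{t}h'(s)\|\nabla V\|_\infty\tfrac{1-e^{-Ks}}{K}$. The specific reparametrisation $h$ makes the first integral equal to $\tfrac{p\rho^2}{2(p-1)C_1(t,K)t}$ and the second to $\tfrac{t\rho\|\nabla V\|_\infty}{2C_2(t,K)}$. So the missing idea is to interpolate in the semigroup time rather than in the power; this replaces your problematic Jensen step by the elementary Laplacian-of-a-power identity.
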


\begin{proof}
Suppose first that $f \in C^2(M)$ with $f$ bounded, $\inf f >0$ and $f$ constant outside a compact set. Given $x \neq y$ and $t>0$, let $\gamma :[0,t]\rightarrow M$ be a geodesic from $x$ to $y$ of length $\rho(x,y)$. Let $v_s := \dot\gamma_s$, so that $|v_s| = \rho(x,y)/t$. Let
\begin{equation}
h(s) := t\left(\frac{e^{2Ks}-1}{e^{2Kt}-1}\right)
\end{equation}
for $s \in [0,t]$, so that $h(0)=0$ and $h(t)=t$. Let $y_s := \gamma_{h(s)}$ and define
\begin{equation}
\phi(s) := \log P^V_s((P^V_{t-s}f)^p)(y_s)
\end{equation}
for $s\in [0,t]$. Then
\begin{equation}
\begin{split}
&\frac{d}{ds}\phi(s)\\
=\text{ }& \frac{1}{P^V_s(P^V_{t-s}f)^p}\bigg(\left(\frac{d}{ds}P^V_s\right)(P^V_{t-s}f)^p + P^V_s\left(\frac{d}{ds}(P^V_{t-s}f)^p\right) \\
&\hspace{65mm}  + \langle \nabla P^V_s(P^V_{t-s}f)^p,\dot{y}_s\rangle\bigg)(y_s)\\
=\text{ }& \frac{1}{P^V_s(P^V_{t-s}f)^p}\bigg((\tfrac{1}{2}\Delta-V)P^V_s(P^V_{t-s}f)^p-p P^V_s((P^V_{t-s}f)^{p-1} (\tfrac{1}{2}\Delta-V)P^V_{t-s}f )\\
&\hspace{63mm}  + \langle \nabla P^V_s((P^V_{t-s}f)^p),\dot{y}_s\rangle\bigg)(y_s)\\
=\text{ }& \frac{1}{P^V_s(P^V_{t-s}f)^p}\bigg(P^V_s(\tfrac{1}{2}\Delta(P^V_{t-s}f)^p)-p P^V_s((P^V_{t-s}f)^{p-1} \tfrac{1}{2}\Delta P^V_{t-s}f )\\
&\hspace{31mm} + (p-1)P^V_s(V(P^V_{t-s}f)^p) + \langle \nabla P^V_s(P^V_{t-s}f)^p,\dot{y}_s\rangle\bigg)(y_s).
\end{split}
\end{equation}
Since
\begin{equation}
\Delta(P^V_{t-s}f)^p = p (P^V_{t-s}f)^{p-1} \Delta P^V_{t-s}f + p(p-1)(P^V_{t-s}f)^{p-2}|\nabla P^V_{t-s}f|^2,
\end{equation}
it follows from Theorem \ref{thm:gradestbas} that
\begin{align}
&\frac{d}{ds}\phi(s)\\
=\text{ }& \frac{1}{P^V_s(P^V_{t-s}f)^p}\bigg( P^V_s \left(\frac{p(p-1)}{2}(P^V_{t-s}f)^{p-2}|\nabla P^V_{t-s}f|^2\right)\\
&\hspace{30mm} + (p-1)P^V_s(V(P^V_{t-s}f)^p) + \langle \nabla P^V_s((P^V_{t-s}f)^p),\dot{y}_s\rangle\bigg)(y_s)\\
\geq\text{ }& \frac{1}{P^V_s(P^V_{t-s}f)^p}P^V_s\bigg((P^V_{t-s}f)^p\bigg( \frac{p(p-1)}{2}|\nabla \log P^V_{t-s}f|^2+ (p-1)V\\
&\hspace{40mm} -\frac{p\rho(x,y)}{t}h'(s)e^{-Ks}|\nabla \log P^V_{t-s}f|\\[2mm]
&\hspace{45mm} -\frac{\rho(x,y)}{t}h'(s)\|\nabla V\|_\infty\left(\frac{1-e^{-Ks}}{K}\right) \bigg)\bigg)(y_s)\\
\geq\text{ }& -\frac{p\rho^2(x,y)h'(s)^2e^{-2Ks}}{2(p-1)t^2}-\frac{\rho(x,y)}{t}h'(s)\|\nabla V\|_\infty\left(\frac{1-e^{-Ks}}{K}\right).
\end{align}
Since
\begin{equation}
h'(s) = 2Kt\left(\frac{e^{2Ks}}{e^{2Kt}-1}\right)
\end{equation}
we have
\begin{equation}
\begin{split}
\frac{d}{ds}\phi(s) \geq-\frac{2p\rho^2(x,y)K^2 e^{2Ks}}{(p-1)(e^{2Kt}-1)^2}-2\rho(x,y)\left(\frac{e^{2Ks}-e^{Ks}}{e^{2Kt}-1}\right)\|\nabla V\|_\infty
\end{split}
\end{equation}
which integrated between $0$ and $t$ yields the inequality. By approximations and the monotone class theorem, as in \cite[Theorem~2.3.3]{wangbook}, this extends to all non-negative, bounded measurable functions. 
\end{proof}

\section{Shift-Harnack inequalities}\label{sec:shiftharn}

In this section we prove two further differentiation formulae, which complement formula \eqref{eq:bismut}, and use them to deduce shift-Harnack inequalities, first introduced by Wang in \cite{Wangharn} and similar to Theorem \ref{thm:harnack} except that the shift in space variable takes place \textit{inside} the semigroup. The approach will be similar to that of \cite{ThalmaierThompson2018}. In particular, we start by supposing that $\alpha$ is a bounded continuously differentiable $1$-form and $\alpha_s$ a solution to the equation
\begin{equation}\label{eq:alphaeq}
\frac{\partial}{\partial s} \alpha_s = (\tfrac{1}{2}\Delta - V)\alpha_s
\end{equation}
for $s \in (0,t]$ with $\alpha_0 = \alpha$. Here $\Delta$ denotes the Hodge Laplacian $-(d^\star + d)^2$ acting on $1$-forms, were $d^\star$ denotes the codifferential operator.

\begin{proposition}
Suppose $V \in C^1(M)$ is bounded below and that $h_s$ is a bounded adapted process with paths belonging to the Cameron-Martin space $L^{1,2}([0,t];[0,\infty))$. Then
\begin{align}
\V_s^x d^\star \alpha_{t-s}(X_s(x))h_s + \V_s^x \alpha_{t-s}\left(//_s Q_s \left(\int_0^s \dot{h}_r Q^{-1}_r dB_r\right)\right)\\
+ \int_0^s \V_r^x \langle dV,\alpha_{t-r}\rangle(X_r(x))h_r dr \label{eq:localmartdiv}
\end{align}
is a local martingale.
\end{proposition}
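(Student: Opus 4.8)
The plan is to follow the pattern of the proof of Theorem~\ref{thm:derivform}. Write $\beta_s:=\alpha_{t-s}$ and $u_s:=d^\star\beta_s$, let $\langle\cdot,\cdot\rangle$ also denote the natural pairing of $T_x^*M$ with $T_xM$, write $Q_s^*$ for the pointwise transpose of $Q_s$, and denote by $A_s$, $C_s$, $D_s$ the three terms appearing in \eqref{eq:localmartdiv}, so that the claim is that $A_s+C_s+D_s$ is a local martingale. I would compute the It\^o differential of $A_s+C_s+D_s$ along the paths of $X(x)$ and show that its bounded-variation part vanishes identically. Since only the local-martingale property is claimed, no global integrability estimates are needed; all the processes below are continuous semimartingales on $[0,t]$, using that $V$ is bounded below (so $\V^x$ is locally bounded), that $\alpha_s$ is smooth for $s>0$ by parabolic regularity, and that $h$ has paths in $L^{1,2}$.

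First I would compute the drift of $A_s+D_s$. From \eqref{eq:alphaeq} we have $\partial_s\beta_s=-(\tfrac12\Delta-V)\beta_s$, with $\Delta$ the Hodge Laplacian. Applying $d^\star$ and using $d^\star\Delta=\Delta d^\star$ together with the identity $d^\star(V\omega)=Vd^\star\omega-\langle dV,\omega\rangle$, the function $u_s$ solves $\partial_su_s=-(\tfrac12\Delta-V)u_s-\langle dV,\beta_s\rangle$, now with $\Delta$ the Laplace--Beltrami operator. Hence It\^o's formula for $\V^x_su_s(X_s(x))$ gives $d\big(\V^x_su_s(X_s(x))\big)\stackrel{m}{=}-\V^x_s\langle dV,\beta_s\rangle(X_s(x))\,ds$, and since $h$ has finite variation,
\[
dA_s\stackrel{m}{=}\V^x_s\Big(u_s(X_s(x))\,\dot h_s-h_s\,\langle dV,\beta_s\rangle(X_s(x))\Big)\,ds .
\]
As $dD_s=\V^x_s\langle dV,\beta_s\rangle(X_s(x))\,h_s\,ds$, the second term cancels and $dA_s+dD_s\stackrel{m}{=}\V^x_s\,u_s(X_s(x))\,\dot h_s\,ds$, so it remains to prove that $dC_s\stackrel{m}{=}-\V^x_s\,u_s(X_s(x))\,\dot h_s\,ds$.

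Next I would compute the drift of $C_s$. Put $\hat\beta_s:=//_s^{-1}\beta_s(X_s(x))\in T_x^*M$ and $M_s:=\int_0^s\dot h_rQ_r^{-1}\,dB_r\in T_xM$, so that $C_s=\V^x_s\langle Q_s^*\hat\beta_s,M_s\rangle$. The It\^o formula for parallel-transported $1$-forms, combined with the Weitzenb\"ock formula $\Delta\omega=\tr\nabla^2\omega-\omega(\Ric^\sharp)$ on $1$-forms (so that the rough-Laplacian term cancels the $\tfrac12\tr\nabla^2$ produced by It\^o), yields $d\hat\beta_s\stackrel{m}{=}\tfrac12\,\hat\beta_s\circ\Rc_s\,ds+V(X_s(x))\,\hat\beta_s\,ds+G_s\,dB_s$, where $\Rc_s:=//_s^{-1}\Ric^\sharp//_s\in\End(T_xM)$, where $(\xi\circ T)(v):=\xi(Tv)$, and where $G_s\colon T_xM\to T_x^*M$ is given by $G_sv:=//_s^{-1}(\nabla_{//_sv}\beta_s)(X_s(x))$; note that $\tr G_s=(\tr\nabla\beta_s)(X_s(x))=-u_s(X_s(x))$. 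Since $Q$ has finite variation, \eqref{eq:eqforQ} gives $d(Q_s^*\hat\beta_s)=(d\hat\beta_s)\circ Q_s-\tfrac12\,\hat\beta_s\circ\Rc_s\circ Q_s\,ds$, so the two curvature terms cancel and $d(Q_s^*\hat\beta_s)\stackrel{m}{=}V(X_s(x))\,Q_s^*\hat\beta_s\,ds+Q_s^*G_s\,dB_s$. Applying the product rule to $C_s=\V^x_s\langle Q_s^*\hat\beta_s,M_s\rangle$, the drift $-V(X_s(x))\V^x_s\langle Q_s^*\hat\beta_s,M_s\rangle$ coming from $\V^x$ cancels the drift coming from $Q_s^*\hat\beta_s$; the terms pairing $M_s$ against $Q_s^*G_s\,dB_s$ and $Q_s^*\hat\beta_s$ against $dM_s=\dot h_sQ_s^{-1}\,dB_s$ are local-martingale differentials; and the only surviving bounded-variation term is the cross-variation, which with an orthonormal basis $(e_i)$ of $T_xM$ equals
\[
\V^x_s\,\dot h_s\sum_i\langle Q_s^*G_se_i,Q_s^{-1}e_i\rangle\,ds=\V^x_s\,\dot h_s\sum_i\langle G_se_i,e_i\rangle\,ds=\V^x_s\,\dot h_s\,(\tr G_s)\,ds=-\V^x_s\,\dot h_s\,u_s(X_s(x))\,ds .
\]
This is exactly what was needed, so $dA_s+dC_s+dD_s\stackrel{m}{=}0$ and $A_s+C_s+D_s$ is a local martingale.

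The only genuine difficulty is bookkeeping: one must fix mutually consistent sign conventions for $d^\star$, for the Hodge and Laplace--Beltrami Laplacians, and for the Weitzenb\"ock formula, and above all correctly identify the It\^o cross-variation term as $\sum_i\langle Q_s^*G_se_i,Q_s^{-1}e_i\rangle=(\tr\nabla\beta_s)(X_s(x))=-d^\star\beta_s(X_s(x))$. It is precisely this correction term that forces the codifferential $d^\star\alpha$, rather than $\alpha$ itself, into the first summand of \eqref{eq:localmartdiv}, in parallel with the way the It\^o correction produces the differentiation-inside-the-semigroup formula of \cite{ThalmaierThompson2018}.
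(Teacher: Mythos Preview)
Your argument is correct and follows essentially the same route as the paper: both use the evolution equation $\partial_s d^\star\alpha_s=(\tfrac12\Delta-V)d^\star\alpha_s+\langle dV,\alpha_s\rangle$, the Weitzenb\"ock identity to make $\V^x_s\alpha_{t-s}(//_sQ_s\,\cdot)$ a local martingale, and the identification of the It\^o cross-variation with $-d^\star\alpha_{t-s}(X_s(x))$. The only difference is packaging---the paper first builds the two auxiliary local martingales $n_sh_s-\int_0^s n_r\dot h_r\,dr$ (with $n_s:=\V^x_s d^\star\alpha_{t-s}(X_s(x))+\int_0^s\V^x_r\langle dV,\alpha_{t-r}\rangle\,dr$) and $\int_0^s\V^x_r d^\star\alpha_{t-r}(X_r(x))\dot h_r\,dr+C_s$ separately and combines them via integration by parts, whereas you compute the It\^o drift of the full sum directly; also, a minor notational slip: your two uses of $\stackrel{m}{=}$ that still display the $G_s\,dB_s$ term should be plain equalities, since you need that martingale part to extract the cross-variation.
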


\begin{proof}
Since $d^\star$ commutes with $\Delta$, with
\begin{equation}
-d^\star(V \alpha_s) = -V d^\star \alpha_s + \langle dV,\alpha_s \rangle,
\end{equation}
by equation \eqref{eq:alphaeq} we have
\begin{equation}
\frac{\partial}{\partial s} d^\star \alpha_s = (\tfrac{1}{2}\Delta - V) d^\star \alpha_s + \langle dV,\alpha_s\rangle.
\end{equation}
Consequently, by It\^{o}'s formula, we find that
\begin{equation}
n_s:=\V_s^x (d^\star \alpha_{t-s})(X_s(x)) + \int_0^s\V_r^x \langle dV,\alpha_{t-r}\rangle(X_r(x)) dr
\end{equation}
is a local martingale and therefore so is
\begin{equation}\label{eq:localmartonediv}
n_s h_s - \int_0^s n_r \dot{h}_r dr
\end{equation}
with the latter starting at $d^\star \alpha_t h_0$. Moreover
\begin{align}
\V_s^x (d^\star \alpha_{t-s})(X_s(x))\dot{h}_s ds &= -\V_s^x \sum_{i=1}^n (\nabla_{//_s e_i} \alpha_{t-s})(//_s e_i) \dot{h}_s ds\\
&=  -\V_s^x \Big\langle \sum_{i=1}^n (\nabla_{//_s e_i} \alpha_{t-s})(//_s Q_s) dB_s^i,\dot{h}_s Q^{-1}_s dB_s\Big\rangle
\end{align}
where $\lbrace e_i \rbrace_{i=1}^n$ is any orthonormal basis of $T_xM$. Since
\begin{equation}\label{eq:dagger}
d(\V_s^x \alpha_{t-s}(//_s Q_s)) = \V_s^x \sum_{i=1}^n (\nabla_{//_s e_i} \alpha_{t-s})(//_s Q_s) dB_s^i
\end{equation}
it follows that
\begin{equation}\label{eq:localmarttwodiv}
\int_0^s \V_r^x (d^\star \alpha_{t-r})(X_r(x))\dot{h}_rdr +\V_s^x \alpha_{t-s}(//_s Q_s)\left(\int_0^s \dot{h}_r Q^{-1}_r dB_r\right)
\end{equation}
is also a local martingale. Using the fact that \eqref{eq:localmartonediv} and \eqref{eq:localmarttwodiv} are local martingales, and since by integration by parts
\begin{align}
-\int_0^s \int_0^r \V^x_u \langle dV,\alpha_{t-u}\rangle(X_u(x)) du \dot{h}_r dr =\,& - \int_0^s \V^x_r \langle dV, \alpha_{t-r}\rangle (X_r(x))dr h_s \\
&+ \int_0^s \V_r^x \langle dV,\alpha_{t-r}\rangle(X_r(x)) h_r dr,
\end{align}
we easily verify that \eqref{eq:localmartdiv} is also a local martingale. 
\end{proof}

\begin{theorem}\label{thm:divthm}
Suppose $V \in C^1(M)$ is bounded below with $\nabla V$ and $\Ric$ bounded. Suppose $\alpha$ is a bounded continuously differentiable $1$-form with $d^\star \alpha$ bounded. Then
\begin{equation}
P^V_t (d^\star \alpha)(x) = -\frac{1}{t}\E\left[\V_t^x \alpha\left(//_t Q_t \int_0^t Q^{-1}_s (dB_s +(t-s)//_s^{-1}\nabla Vds)\right)\right]
\end{equation}
for all $x \in M$ and $t>0$.
\end{theorem}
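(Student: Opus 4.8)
The plan is to apply the preceding proposition and extract the derivative formula for $P^V_t(d^\star\alpha)$ from the martingale property of the process \eqref{eq:localmartdiv}, by choosing the adapted process $h$ appropriately and then evaluating the expectation at times $0$ and $t$. First I would take $h$ to be the simplest admissible choice, namely $h_s = (t-s)/t$ on $[0,t]$, so that $h_0 = 1$, $h_t = 0$, and $\dot h_s = -1/t$; this is bounded with paths in $L^{1,2}([0,t];[0,\infty))$, so the proposition applies. With this choice the second term in \eqref{eq:localmartdiv} becomes $-\frac{1}{t}\V_s^x \alpha_{t-s}\big(//_s Q_s \int_0^s Q_r^{-1} dB_r\big)$, the first term becomes $\frac{t-s}{t}\V_s^x d^\star\alpha_{t-s}(X_s(x))$, and the third is $\frac{1}{t}\int_0^s \V_r^x\langle dV,\alpha_{t-r}\rangle(X_r(x))(t-r)\,dr$.

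Next I would verify that this local martingale is a genuine martingale, so that its expectations at $s=0$ and $s=t$ agree. At $s=0$ only the first term survives, giving $d^\star\alpha_t(x) = P^V_t(d^\star\alpha)(x)$, using that $\alpha_s$ solves \eqref{eq:alphaeq} and the Feynman-Kac representation for the Hodge-Laplacian semigroup applied to $d^\star\alpha$; here the hypothesis that $d^\star\alpha$ is bounded is used. At $s=t$ the first term vanishes (since $h_t=0$) and one is left with
\begin{equation}
-\frac{1}{t}\E\left[\V_t^x \alpha\left(//_t Q_t \int_0^t Q_s^{-1} dB_s\right)\right] + \frac{1}{t}\E\left[\int_0^t \V_r^x (t-r)\langle dV,\alpha_{t-r}\rangle(X_r(x))\,dr\right].
\end{equation}
To handle the second expectation I would use \eqref{eq:dagger} together with an integration-by-parts/Itô argument identifying $\langle dV,\alpha_{t-r}\rangle(X_r(x)) = \langle \nabla V, \alpha_{t-r}^\sharp\rangle(X_r(x))$ evaluated along the flow, and rewrite $\int_0^t (t-r)\V_r^x\langle dV,\alpha_{t-r}\rangle(X_r(x))\,dr$ as the expectation of $\V_t^x \alpha\big(//_t Q_t \int_0^t (t-s) Q_s^{-1} //_s^{-1}\nabla V\,ds\big)$; the mechanism is exactly that the additive functional $\int_0^s \V_r^x\langle dV,\alpha_{t-r}\rangle(X_r(x))\,dr$ compensates the drift one would get by pushing the $\nabla V$ term through $//_s Q_s$, mirroring the drift term $-(t-s)dV(//_sQ_s)$ in formula \eqref{eq:bismut}. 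Combining the two pieces yields the single stochastic-integral-plus-drift expression $//_t Q_t \int_0^t Q_s^{-1}(dB_s + (t-s)//_s^{-1}\nabla V\,ds)$, which is the claim.

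The boundedness hypotheses ($\Ric$ bounded, $\nabla V$ bounded, $\alpha$ and $d^\star\alpha$ bounded) enter precisely in the integrability estimates needed for the true-martingale claim: boundedness of $\Ric$ gives $\|Q_s\|\le e^{K^-s}$ and hence control of $\int_0^s Q_r^{-1}dB_r$ in every $L^p$; boundedness of $\alpha$ and $\nabla V$ controls the resulting $\alpha(//_sQ_s(\cdots))$ term and the drift term uniformly; boundedness of $d^\star\alpha$ controls the first term and, via the earlier gradient estimates of Section~\ref{sec:gradests} applied to $\alpha_{t-s}$, controls $\sup_{s\le t}$ of the whole process in $L^1$. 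I expect the main obstacle to be this last integrability verification — in particular, bounding $\E[\sup_{s\le t}|\V_s^x \alpha_{t-s}(//_sQ_s\int_0^s Q_r^{-1}dB_r)|]$ and the compensator term uniformly enough to justify optional stopping — together with making the identification of the drift term clean, i.e. carefully matching the additive functional $\int_0^s\V_r^x\langle dV,\alpha_{t-r}\rangle\,dr$ against the $\nabla V$ contribution inside the damped parallel transport. One then extends from the dense class of nice $\alpha$ (or smooth compactly supported data) to the stated generality by approximation, as in the proof of Theorem~\ref{thm:derivform}.
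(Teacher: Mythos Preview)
Your overall approach matches the paper's: take $h_s=(t-s)/t$, verify that the local martingale \eqref{eq:localmartdiv} is a true martingale using the boundedness of $\Ric$, $\nabla V$, $\alpha$ and $d^\star\alpha$, equate expectations at $s=0$ and $s=t$, and handle the $\langle dV,\alpha_{t-r}\rangle$ term via the Feynman--Kac representation \eqref{eq:dagger} and the Markov property. This is exactly what the paper does (in one sentence).

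There is, however, a genuine slip in your bookkeeping at $s=0$. You write that ``$d^\star\alpha_t(x)=P^V_t(d^\star\alpha)(x)$, using \ldots the Feynman--Kac representation for the Hodge-Laplacian semigroup applied to $d^\star\alpha$''. This is false when $V$ is nonconstant: as the proof of the preceding proposition shows, $d^\star\alpha_s$ satisfies
\[
\partial_s\, d^\star\alpha_s \;=\; (\tfrac12\Delta - V)\,d^\star\alpha_s \;+\; \langle dV,\alpha_s\rangle,
\]
so $d^\star$ does \emph{not} intertwine the form semigroup with the scalar Feynman--Kac semigroup, and the value of \eqref{eq:localmartdiv} at $s=0$ is $d^\star\alpha_t(x)$, not $P^V_t(d^\star\alpha)(x)$. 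If you carry your computation through as written, the two displayed terms at $s=t$ combine with a \emph{minus} sign in front of the $\nabla V$ drift (since $-\tfrac1t A + \tfrac1t B = -\tfrac1t(A-B)$), contrary to the stated formula. The paper's proof is extremely terse on this point as well, but the mechanism it names --- the Markov property applied to the $\nabla V$ term via \eqref{eq:dagger} --- is what reconciles the left-hand side with $P^V_t(d^\star\alpha)$; you should track that correction rather than asserting the commutation.
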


\begin{proof}
According to the assumptions of the theorem, with $h_s = (t-s)/t$, the local martingale \eqref{eq:localmartdiv} is a true martingale. Furthermore, by \eqref{eq:dagger}, it follows that
\begin{equation}
\alpha_t = \E\left[ \V_t \alpha(//_t Q_t)\right]
\end{equation}
so the result follows by evaluating the martingale \eqref{eq:localmartdiv} at the times $0$ and $t$, and applying the Markov property to the term involving $\nabla V$. 
\end{proof}

Given a vector field $Y$, the Bismut formula \eqref{eq:bismut} provides a probabilistic expression for the derivative $Y(P^V_tf)$ which does not involve derivatives of $f$. By Theorem \ref{thm:divthm}, and using the facts that $\divv Y = - d^\star Y^\flat$ and $\divv (f Y) = Yf + f \divv Y$, we obtain the following theorem which provides a similar expression for the derivative $P^V_t(Y(f))$:

\begin{theorem}\label{thm:diffformula}
Suppose $V \in C^1(M)$ is bounded below with $\nabla V$ and $\Ric$ bounded. Suppose $f$ is a bounded $C^1$ function and $Y$ a bounded $C^1$ vector field for which $\divv Y$ and $Y(f)$ are also bounded. Then
\begin{align}\label{eq:diffformula}
&P_t^V(Y(f))(x)\\[2mm]
=& - \E \left[ \V_t^x f(X_t(x)) (\divv Y)(X_t(x))\right]\\
& +\frac{1}{t} \E\left[\V_t^x f(X_t(x))\bigg\langle Y(X_t(x)), //_t Q_t \int_0^t Q_s^{-1}(dB_s +(t-s)//_s^{-1}\nabla V ds)\bigg\rangle \right]
\end{align}
for all $x \in M$ and $t>0$.
\end{theorem}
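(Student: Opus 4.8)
The plan is to obtain this as an immediate consequence of Theorem \ref{thm:divthm}, applied to the $1$-form $\alpha$ given by the metric dual of the vector field $fY$. First I would set $\alpha := (fY)^\flat$. Since $f$ is a bounded $C^1$ function and $Y$ a bounded $C^1$ vector field, $fY$ is a bounded $C^1$ vector field and hence $\alpha$ is a bounded continuously differentiable $1$-form. Using $d^\star Z^\flat = -\divv Z$ for a vector field $Z$, together with the Leibniz rule $\divv(fY) = Y(f) + f\,\divv Y$, we obtain
\begin{equation}
d^\star \alpha = -\divv(fY) = -\big(Y(f) + f\,\divv Y\big),
\end{equation}
which is bounded since $Y(f)$, $f$ and $\divv Y$ are all bounded by hypothesis. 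Thus $\alpha$ is an admissible input for Theorem \ref{thm:divthm}.

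Next I would apply Theorem \ref{thm:divthm} with this choice of $\alpha$. On the left-hand side, linearity of $P^V_t$ and the Feynman-Kac formula give
\begin{equation}
P^V_t(d^\star \alpha)(x) = -P^V_t(Y(f))(x) - \E\!\left[\V^x_t f(X_t(x))\,(\divv Y)(X_t(x))\right].
\end{equation}
On the right-hand side, the random vector $//_t Q_t \int_0^t Q_s^{-1}(dB_s + (t-s)//_s^{-1}\nabla V\,ds)$ lies in $T_{X_t(x)}M$, and there $\alpha(w) = f(X_t(x))\,\langle Y(X_t(x)), w\rangle$, so Theorem \ref{thm:divthm} yields
\begin{equation}
P^V_t(d^\star \alpha)(x) = -\frac{1}{t}\E\!\left[\V^x_t f(X_t(x)) \left\langle Y(X_t(x)), //_t Q_t \int_0^t Q_s^{-1}\big(dB_s + (t-s)//_s^{-1}\nabla V\,ds\big)\right\rangle\right].
\end{equation}
Equating the two expressions for $P^V_t(d^\star\alpha)(x)$ and solving for $P^V_t(Y(f))(x)$ gives exactly \eqref{eq:diffformula}.

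There is no genuine analytic obstacle here: the substantive work --- uniform boundedness of $dP^V f$, upgrading the local martingale of the preceding proposition to a true martingale, and the Markov-property manipulation producing the $\nabla V$ drift --- has already been absorbed into Theorem \ref{thm:divthm}. The only points needing care are bookkeeping ones: checking that the stated boundedness hypotheses on $f$, $Y$, $\divv Y$ and $Y(f)$ are precisely what is required to make $\alpha = (fY)^\flat$ and $d^\star\alpha$ satisfy the hypotheses of Theorem \ref{thm:divthm}, and tracking the sign conventions in $d^\star Z^\flat = -\divv Z$ and in the Leibniz rule so that the two minus signs on $Y(f)$ and $f\,\divv Y$ are distributed correctly between the two terms of the final identity.
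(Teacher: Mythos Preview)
Your proposal is correct and follows exactly the approach the paper takes: the paper derives Theorem~\ref{thm:diffformula} directly from Theorem~\ref{thm:divthm} by setting $\alpha=(fY)^\flat$ and using the identities $\divv Y=-d^\star Y^\flat$ and $\divv(fY)=Y(f)+f\,\divv Y$. Your verification that the hypotheses on $f$, $Y$, $\divv Y$ and $Y(f)$ make $\alpha$ and $d^\star\alpha$ admissible for Theorem~\ref{thm:divthm}, and your sign tracking, are all in order.
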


By Theorem \ref{thm:diffformula}, we have the following two propositions:

\begin{proposition}\label{prop:firstdivest}
Suppose $V \in C^1(M)$ is non-negative with $\nabla V$ bounded and $2K \leq \Ric \leq 2L$ for constants $K$ and $L$. Suppose $f$ is a bounded $C^1$ function and $Y$ a bounded $C^1$ vector field for which $\divv Y$ and $Y(f)$ are also bounded. Then
\begin{equation}
|P_t^V(Y(f))|(x)\leq \alpha^V_t(Y)(P_t^Vf^2)^\frac{1}{2}(x)
\end{equation}
for all $x \in M$ and $t>0$, where
\begin{align}
\alpha^V_t(Y) := \text{ }&\|\divv Y\|_\infty+ \|Y\|_\infty \frac{e^{-Kt}}{\sqrt{t}}\left( \frac{e^{2Lt}-1}{2Lt}\right)^\frac{1}{2}\\
&+\|Y\|_\infty \|\nabla V\|_\infty \frac{e^{-Kt}}{L}\left( \frac{1}{\frac{Lt}{2}\left(\coth\left(\frac{Lt}{2}\right)-1\right)}-1\right).
\end{align}
\end{proposition}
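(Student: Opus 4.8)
The plan is to apply the differentiation formula of Theorem~\ref{thm:diffformula} and estimate its right-hand side term by term; the two structural facts used are the two-sided curvature bound $2K \le \Ric \le 2L$, which forces $\|Q_t\| \le e^{-Kt}$ and $\|Q_s^{-1}\| \le e^{Ls}$ via equation \eqref{eq:eqforQ} and the corresponding equation for $Q^{-1}$, and the non-negativity of $V$, which gives $\V_t^x \le 1$ and hence $\E[\V_t^x] \le 1$. Throughout I would use Cauchy--Schwarz in the form $\E[\V_t^x g h] \le \E[\V_t^x g^2]^{1/2}\E[\V_t^x h^2]^{1/2}$, so that the factor $f(X_t(x))$ always produces $\E[\V_t^x f^2(X_t(x))]^{1/2} = (P_t^V f^2)^{1/2}(x)$.

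For the first term on the right-hand side of \eqref{eq:diffformula}, I would simply bound $|\divv Y| \le \|\divv Y\|_\infty$ and apply the above with $h \equiv 1$, giving $|\E[\V_t^x f(X_t(x))(\divv Y)(X_t(x))]| \le \|\divv Y\|_\infty (P_t^V f^2)^{1/2}(x)$, which reproduces the first term of $\alpha^V_t(Y)$.

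For the stochastic term I would first peel off $(P_t^V f^2)^{1/2}(x)$ by Cauchy--Schwarz, reducing matters to $t^{-1}\E[\V_t^x\langle Y(X_t(x)), //_t Q_t\int_0^t Q_s^{-1}(dB_s + (t-s)//_s^{-1}\nabla V\,ds)\rangle^2]^{1/2}$. Inside the expectation I would discard $\V_t^x \le 1$, use $|\langle Y(X_t(x)),\cdot\rangle| \le \|Y\|_\infty|\cdot|$ and the isometry of $//_t$ together with $\|Q_t\| \le e^{-Kt}$, leaving $\|Y\|_\infty e^{-Kt}t^{-1}\E[|\int_0^t Q_s^{-1}(dB_s + (t-s)//_s^{-1}\nabla V(X_s(x))\,ds)|^2]^{1/2}$. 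By Minkowski's inequality this splits into a martingale part and a drift part. The drift part, using $|\nabla V| \le \|\nabla V\|_\infty$, $\|Q_s^{-1}\| \le e^{Ls}$, and the integration by parts $\int_0^t (t-s)e^{Ls}\,ds = (e^{Lt}-1-Lt)/L^2$, is deterministic; the martingale part is controlled, via It\^o's isometry and $\|Q_s^{-1}\| \le e^{Ls}$, by $\int_0^t e^{2Ls}\,ds = (e^{2Lt}-1)/(2L)$.

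Summing the three contributions and dividing by $t$ gives a bound of the form $\|\divv Y\|_\infty + \|Y\|_\infty e^{-Kt}t^{-1/2}((e^{2Lt}-1)/(2Lt))^{1/2} + \|Y\|_\infty\|\nabla V\|_\infty e^{-Kt}(e^{Lt}-1-Lt)/(L^2 t)$, and the last move is purely algebraic: with $x = Lt/2$ one has $\coth x - 1 = 2/(e^{2x}-1)$, whence $\tfrac{Lt}{2}(\coth(\tfrac{Lt}{2}) - 1) = Lt/(e^{Lt}-1)$ and therefore $(e^{Lt}-1-Lt)/(L^2 t) = L^{-1}((\tfrac{Lt}{2}(\coth(\tfrac{Lt}{2})-1))^{-1} - 1)$, identifying the third summand with the one in $\alpha^V_t(Y)$. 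I expect the main obstacle to be organisational rather than conceptual: routing each estimate through the operator norms of $Q_t$ and $Q_s^{-1}$ so that the bounds $\Ric \ge 2K$ and $\Ric \le 2L$ enter exactly where intended, checking that the hypotheses ($f$, $Y$, $\divv Y$, $Y(f)$ bounded and $\nabla V$, $\Ric$ bounded) are precisely what is needed both to invoke Theorem~\ref{thm:diffformula} and to justify the moment computations, and not slipping up in the hyperbolic-cotangent bookkeeping at the end.
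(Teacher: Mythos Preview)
Your proposal is correct and follows essentially the same route as the paper's proof: apply Theorem~\ref{thm:diffformula}, split into the divergence term, the stochastic-integral term, and the $\nabla V$-drift term, then estimate each via Cauchy--Schwarz together with $\|Q_t\|\le e^{-Kt}$, $\|Q_s^{-1}\|\le e^{Ls}$, $\V_t^x\le 1$, and $\int_0^t(t-s)e^{Ls}\,ds=(e^{Lt}-1-Lt)/L^2$. Your write-up is in fact more explicit than the paper's, which records the same three-term bound and then simply asserts equality with $\alpha^V_t(Y)$ without spelling out the $\coth$ identity.
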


\begin{proof}
By Theorem \ref{thm:diffformula}, we have
\begin{align}
&|P_t^V(Y(f))|(x)\\[1mm]
\leq \text{ }& \|Y\|_\infty \frac{e^{-tK}}{t} \E\left[ \left( \int_0^t Q_s^{-1}dB_s\right)^2\right]^\frac{1}{2} (P_t^Vf^2)^\frac{1}{2}(x) \\[1mm]
 \text{ }& +\left(\|\divv Y\|_\infty +\|Y\|_\infty \|\nabla V\|_\infty \frac{e^{-tK}}{t}\int_0^t e^{Ls}(t-s) ds\right)(P_t^Vf^2)^\frac{1}{2}(x)\\[1mm]
= \text{ }&\alpha^V_t(Y)(P_t^Vf^2)^\frac{1}{2}(x)
\end{align}
as required. 
\end{proof}

\begin{proposition}\label{prop:seconddivest}
Suppose $V \in C^1(M)$ is non-negative with $\nabla V$ bounded and $2K \leq \Ric \leq 2L$ for constants $K$ and $L$. Suppose $f>0$ is a bounded $C^1$ function and $Y$ a bounded $C^1$ vector field for which $\divv Y$ and $Y(f)$ are also bounded. Then
\begin{equation}
|P_t^V(Y(f))|(x) \leq \delta(P^V_t(f \log f)(x) - P^V_tf\log P^V_tf(x)) + \beta_t^V(\delta,Y) P_t^Vf(x)
\end{equation}
for all $x \in M$, $t>0$ and $\delta >0$, where
\begin{align}
\beta_t^V(\delta,Y):= \text{ }&\|\divv Y\|_\infty +\|Y\|_\infty^2 \frac{e^{-2Kt}}{2t\delta} \left(\frac{e^{2Lt}-1}{2Lt}\right)\\[2mm]
&+ \|Y\|_\infty \|\nabla V\|_\infty \frac{e^{-Kt}}{L}\left( \frac{1}{\frac{Lt}{2}\left(\coth\left(\frac{Lt}{2}\right)-1\right)}-1\right).
\end{align}
\end{proposition}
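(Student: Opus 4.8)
The plan is to feed the probabilistic representation of Theorem~\ref{thm:diffformula} into the Gibbs (entropic) variational inequality, in the same spirit as Proposition~\ref{prop:firstdivest} but with the Cauchy--Schwarz step replaced by the sharp duality between $L\log L$ and exponential moments. Write
\[
\Theta:=//_tQ_t\int_0^t Q_s^{-1}\big(dB_s+(t-s)//_s^{-1}\nabla V\,ds\big),
\]
so that Theorem~\ref{thm:diffformula} reads $P_t^V(Y(f))(x)=-\E[\V_t^x f(X_t(x))(\divv Y)(X_t(x))]+\tfrac1t\E[\V_t^x f(X_t(x))\langle Y(X_t(x)),\Theta\rangle]$. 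The first term is bounded in absolute value by $\|\divv Y\|_\infty P_t^V f(x)$, which yields the first summand of $\beta_t^V(\delta,Y)$, so everything reduces to controlling $\tfrac1t\E[\V_t^x f(X_t(x))\langle Y(X_t(x)),\Theta\rangle]$.

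For this I would apply the variational inequality: on the path probability space, for a nonnegative weight $W$ and a random variable $G$ with finite exponential moment, $\E[WG]\leq \delta\,\E[W]\log\E[e^{G/\delta}]+\delta\big(\E[W\log W]-\E[W]\log\E[W]\big)$. Taking $W=\V_t^x f(X_t(x))$ and $G=\pm\langle Y(X_t(x)),\Theta\rangle/t$ and using $\V_t^x\leq 1$ --- so that $\log\V_t^x\leq 0$, whence $\E[W\log W]\leq P_t^V(f\log f)(x)$, while $\E[W]=P_t^V f(x)$ --- one obtains
\[
\tfrac1t\big|\E[\V_t^x f(X_t(x))\langle Y(X_t(x)),\Theta\rangle]\big|\leq \delta\big(P_t^V(f\log f)-P_t^V f\log P_t^V f\big)(x)+\delta\,P_t^V f(x)\,\Lambda,
\]
with $\Lambda:=\max\{\log\E[e^{\langle Y(X_t(x)),\Theta\rangle/(\delta t)}],\log\E[e^{-\langle Y(X_t(x)),\Theta\rangle/(\delta t)}]\}$. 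It then remains to show that $\delta\Lambda$ does not exceed the remaining two summands of $\beta_t^V(\delta,Y)$.

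To estimate $\Lambda$, split $\Theta=\Theta_1+\Theta_2$ with $\Theta_1:=//_tQ_t\int_0^t Q_s^{-1}dB_s$ and $\Theta_2:=//_tQ_t\int_0^t(t-s)Q_s^{-1}//_s^{-1}\nabla V\,ds$. From $\Ric\geq 2K$ and $\Ric\leq 2L$ one has the pathwise bounds $\|Q_t\|\leq e^{-Kt}$ and $\|Q_s^{-1}\|\leq e^{Ls}$, so
\[
|\langle Y(X_t(x)),\Theta_2\rangle|\leq \|Y\|_\infty\|\nabla V\|_\infty e^{-Kt}\int_0^t(t-s)e^{Ls}\,ds=\|Y\|_\infty\|\nabla V\|_\infty e^{-Kt}\,\tfrac{e^{Lt}-1-Lt}{L^2}
\]
almost surely; being deterministic this simply adds $|\langle Y(X_t(x)),\Theta_2\rangle|/(\delta t)$ to $\Lambda$, and after multiplying by $\delta$ and using $\tfrac{e^{Lt}-1}{Lt}=\big(\tfrac{Lt}{2}(\coth\tfrac{Lt}{2}-1)\big)^{-1}$ this is precisely the third summand of $\beta_t^V(\delta,Y)$. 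For the stochastic part, $|\langle Y(X_t(x)),\Theta_1\rangle|\leq \|Y\|_\infty e^{-Kt}|M_t|$ where $M_s:=\int_0^s Q_r^{-1}dB_r$ is a continuous martingale whose bracket is controlled pathwise, via $\|Q_s^{-1}\|\leq e^{Ls}$, by $\tfrac{e^{2Lt}-1}{2L}$; the usual exponential-supermartingale argument then gives the sub-Gaussian bound $\log\E[e^{\pm\langle Y(X_t(x)),\Theta_1\rangle/(\delta t)}]\leq \tfrac{\|Y\|_\infty^2 e^{-2Kt}}{2(\delta t)^2}\cdot\tfrac{e^{2Lt}-1}{2L}$, and multiplying by $\delta$ produces $\|Y\|_\infty^2\tfrac{e^{-2Kt}}{2t\delta}\big(\tfrac{e^{2Lt}-1}{2Lt}\big)$, the second summand of $\beta_t^V(\delta,Y)$. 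Adding the three contributions gives the claim; as usual one first proves the inequality for $f$ with $\inf f>0$ and removes this restriction by approximation.

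The delicate point is the sub-Gaussian estimate for $\langle Y(X_t(x)),\Theta_1\rangle$. Unlike the martingale term in the Bismut formula for $dP_t^Vf$, this quantity is \emph{not} itself a martingale: the prefactor $Q_t^\ast //_t^{-1}Y(X_t(x))$ paired with $M_t$ is not adapted, so the exponential-supermartingale argument cannot be applied to it directly, nor can it be treated as a time-changed Gaussian. One is therefore forced to strip that prefactor off using only the a priori bounds $|Y|\leq\|Y\|_\infty$ and $\|Q_t\|\leq e^{-Kt}$, reducing matters to exponential moments of the \emph{norm} of the genuine martingale $M$; this is where a little care is required to keep the resulting constant in the stated dimension-free form. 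One must also check, exactly as in the proof of Theorem~\ref{thm:divthm}, that all the local martingales appearing are true martingales and that the relevant exponential moments are finite, which rests on the boundedness of $\nabla V$, $\Ric$, $f$, $\divv Y$, $Y$ and $Y(f)$.
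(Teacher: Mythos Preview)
Your proposal is correct and follows essentially the same route as the paper's proof: apply Theorem~\ref{thm:diffformula}, bound the $\divv Y$ term pointwise, feed the remaining expectation into the Gibbs/entropic variational inequality (the paper cites \cite[Lemma~6.45]{Stroock2000}) with weight $\V_t^x f(X_t(x))$, split off the deterministic $\nabla V$ contribution using $\|Q_t\|\leq e^{-Kt}$ and $\|Q_s^{-1}\|\leq e^{Ls}$, and control the exponential moment of the stochastic part via the bracket bound on $\int_0^\cdot Q_s^{-1}\,dB_s$. Your commentary on why one must pass to the norm $|M_t|$ rather than treat $\langle Y(X_t(x)),\Theta_1\rangle$ as a martingale is a useful clarification that the paper leaves implicit.
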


\begin{proof}
By Theorem \ref{thm:diffformula} and \cite[Lemma~6.45]{Stroock2000}, the latter being essentially Jensen's inequality, we have
\begin{align}
&|P_t^V(Y(f))|(x)\\[3mm]
\leq \text{ } &\|\divv Y\|_\infty P_t^Vf(x)\\[3mm]
&+\bigg|\frac{1}{t} \E\left[\V_t f(X_t(x)) \bigg\langle Y(X_t(x)), //_t Q_t \int_0^t Q_s^{-1}(dB_s +(t-s)//_s^{-1}\nabla V ds)\bigg\rangle \right]\bigg|\\[3mm]
\leq \text{ } & \delta(P^V_t(f \log f)(x) - P^V_tf(x)\log P^V_tf(x))\\[3mm]
&+ \delta P_t^Vf(x) \log \E \left[ \exp\left[\|Y\|_\infty \frac{e^{-Kt}}{t\delta}\bigg|\int_0^t Q_s^{-1}dB_s\bigg| \right]\right]\\[3mm]
& + \left(\|\divv Y\|_\infty + \|Y\|_\infty \|\nabla V\|_\infty \frac{e^{-Kt}}{t}\int_0^t e^{Ls}(t-s)ds \right) P_t^Vf(x)\\[3mm]
\leq \text{ } & \delta(P^V_t(f \log f)(x) - P^V_tf\log P^V_tf(x)) + \beta_t^V(\delta,Y) P_t^Vf(x)
\end{align}
as required. 
\end{proof}

These estimates can be used to derive shift-Harnack inequalities, as introduced by Wang for Markov operators on a Banach space; \cite[Proposition~2.3]{Wangharn}. In particular, suppose as in \cite{ThalmaierThompson2018} that $\lbrace F_s\colon s \in [0,1] \rbrace$ is a $C^1$ family of diffeomorphisms of $M$ with $F_0 = \id_M$. For each $s \in [0,1]$ define a vector field $Y_s$ on $M$ by
\begin{equation}
Y_s := (DF_s)^{-1} \frac{d}{ds}F_s
\end{equation}
and assume that $Y_s$ and $\divv Y_s$ are uniformly bounded.

\begin{theorem}\label{thm:shiftharnone}
Suppose $V \in C^1(M)$ is non-negative with $\nabla V$ bounded and $2K \leq \Ric \leq 2L$ for constants $K$ and $L$. Suppose $f\geq 0$ is a bounded measurable function and that $Y_s$ and $\divv Y_s$ are uniformly bounded. Then
\begin{equation}
P^V_tf (x)\leq P^V_t(f\circ F_1)(x) + \left(\int_0^1 (\alpha_t^V)^2(Y_{s})ds\right)^\frac{1}{2} \left(P^V_tf^2\right)^\frac{1}{2}(x)
\end{equation}
for all $x\in M$ and $t\geq 0$, where $\alpha_t^V$ is defined as in Proposition \ref{prop:firstdivest}.
\end{theorem}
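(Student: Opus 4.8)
The plan is to derive the shift-Harnack inequality from Proposition \ref{prop:firstdivest} by the standard interpolation-along-a-flow argument, as in \cite{ThalmaierThompson2018}. First I would reduce to the case of sufficiently regular $f$: assume $f$ is a bounded $C^1$ function with $\inf f > 0$ and $f$ constant outside a compact set, so that all the boundedness hypotheses needed to apply Proposition \ref{prop:firstdivest} to $Y_s(f)$ are available; the general case of non-negative bounded measurable $f$ follows at the end by approximation and the monotone class theorem, just as in the proof of Theorem \ref{thm:harnack}.

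Next, for fixed $x \in M$ and $t \geq 0$, I would define the interpolating function
\begin{equation}
\psi(s) := P^V_t(f \circ F_s)(x), \qquad s \in [0,1],
\end{equation}
so that $\psi(0) = P^V_tf(x)$ and $\psi(1) = P^V_t(f \circ F_1)(x)$. Differentiating in $s$ and using that $\frac{d}{ds}(f \circ F_s) = (df)\big(\frac{d}{ds}F_s\big) \circ F_s = \big((DF_s)\,Y_s(f \circ F_s)\big)\circ\cdots$ — more precisely, by the definition $Y_s = (DF_s)^{-1}\frac{d}{ds}F_s$ one gets $\frac{d}{ds}(f\circ F_s) = \big(Y_s(f)\big)\circ F_s$ after the chain-rule bookkeeping — gives
\begin{equation}
\psi'(s) = P^V_t\big( (Y_s(f))\circ F_s \big)(x).
\end{equation}
Then I would write $g_s := f \circ F_s$ (still bounded $C^1$ with the required derivatives bounded, since $F_s$ is a $C^1$ diffeomorphism and $Y_s, \divv Y_s$ are uniformly bounded), note that $(Y_s(f))\circ F_s$ relates to $Y_s(g_s)$ by the change of variables for the vector field, and apply Proposition \ref{prop:firstdivest} to obtain
\begin{equation}
|\psi'(s)| \leq \alpha_t^V(Y_s)\,\big(P^V_t g_s^2\big)^{1/2}(x) = \alpha_t^V(Y_s)\,\big(P^V_t (f^2 \circ F_s)\big)^{1/2}(x).
\end{equation}

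Finally, integrating from $0$ to $1$ and applying the Cauchy–Schwarz inequality to the $s$-integral,
\begin{equation}
P^V_tf(x) - P^V_t(f\circ F_1)(x) = -\int_0^1 \psi'(s)\,ds \leq \int_0^1 \alpha_t^V(Y_s)\,\big(P^V_t(f^2\circ F_s)\big)^{1/2}(x)\,ds,
\end{equation}
and then bounding $\big(P^V_t(f^2\circ F_s)\big)^{1/2}$ — here one needs a uniform-in-$s$ control; since $F_s$ is a diffeomorphism this is not pointwise comparable to $(P^V_tf^2)^{1/2}(x)$ directly, so the cleanest route is to first establish the inequality with $f^2$ replaced by $f$ appropriately, or to run the whole argument with $\varphi(s) := \big(\int_0^s (\alpha_t^V)^2(Y_r)\,dr\big)^{1/2}$ absorbed as a Grönwall-type weight. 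The honest version is: apply Cauchy–Schwarz as $\int_0^1 \alpha_t^V(Y_s) u(s)\,ds \leq \big(\int_0^1 (\alpha_t^V)^2(Y_s)\,ds\big)^{1/2}\big(\int_0^1 u(s)^2\,ds\big)^{1/2}$ with $u(s)^2 = P^V_t(f^2\circ F_s)(x)$, and then observe that by a further interpolation/monotonicity argument (or by replacing $f$ by $f\circ F_s$ and iterating, exactly as in \cite{ThalmaierThompson2018}) one reduces $\int_0^1 P^V_t(f^2\circ F_s)(x)\,ds$ to $P^V_tf^2(x)$. The main obstacle is precisely this last step: making rigorous the passage from the pointwise-in-$s$ quantity $P^V_t(f^2\circ F_s)(x)$ back to $P^V_tf^2(x)$; I expect it is handled by the same bootstrap used in \cite[Theorem~?]{ThalmaierThompson2018} — namely, one proves the estimate first for the composed functions and takes a supremum, or one notes that the family $\{F_s\}$ may be reparametrised so that the quadratic weight is exactly $\int_0^1(\alpha_t^V)^2(Y_s)\,ds$. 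Once $f$ is general, approximate $f$ from below by $C^1$ functions bounded away from $0$ and constant at infinity, pass to the limit using the continuity and positivity of $P^V_t$, and invoke the monotone class theorem as in the proof of Theorem \ref{thm:harnack}.
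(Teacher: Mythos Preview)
Your proposal has a genuine gap, and you have in fact located it yourself: after applying Proposition~\ref{prop:firstdivest} to the simple interpolation $\psi(s)=P^V_t(f\circ F_s)(x)$ you are left with the factor $(P^V_t(f^2\circ F_s))^{1/2}(x)$, and there is no way to bound this pointwise by $(P^V_tf^2)^{1/2}(x)$ uniformly in $s$. Neither a Gr\"onwall weight nor a reparametrisation of $\{F_s\}$ helps here, because the quantity $P^V_t(f^2\circ F_s)(x)$ genuinely depends on $s$ through the diffeomorphism and is not controlled by $P^V_tf^2(x)$.

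The device the paper uses (following Wang \cite[Proposition~2.3]{Wangharn}) is a different interpolation: one differentiates
\[
s\longmapsto P^V_t\!\left(\frac{f}{1+rsf}\circ F_{1-s}\right)(x)
\]
with a free parameter $r>0$. The extra $s$-dependence of the rational factor produces a \emph{negative} term $-r\,P^V_t\!\big((f/(1+rsf))^2\circ F_{1-s}\big)$, which is exactly of the form needed to absorb, by completing the square, the term $\alpha_t^V(Y_{1-s})\big(P^V_t((f/(1+rsf))^2\circ F_{1-s})\big)^{1/2}$ coming from Proposition~\ref{prop:firstdivest}. After integrating in $s$ one obtains
\[
P^V_t\!\left(\frac{f}{1+rf}\right)(x)\;\le\;P^V_t(f\circ F_1)(x)+\frac{1}{4r}\int_0^1(\alpha_t^V)^2(Y_s)\,ds,
\]
and the elementary inequality $f-\frac{f}{1+rf}\le r f^2$ then brings in $P^V_tf^2(x)$; minimising over $r>0$ gives the stated bound. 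So the missing idea is precisely this rational interpolation with the auxiliary parameter, which converts the uncontrolled square-root term into one that can be absorbed.
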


\begin{proof}
It suffices to prove for $f$ continuously differentiable. Since
\begin{equation}
\frac{d}{ds}(f\circ F_s) = \nabla_{V_s} (f\circ F_s),
\end{equation}
and following the proof of \cite[Proposition~2.3]{Wangharn}, we see for all $r>0$ that
\begin{equation}
\begin{split}
&\frac{d}{ds}P^V_t\left( \frac{f}{1+rsf}(F_{1-s})\right) \\
&\quad = -rP^V_t\left( \frac{f^2}{(1+rsf)^2}(F_{1-s})\right) - P^V_t\left(Y_{1-s}\left(\frac{f}{1+rsf}(F_{1-s})\right)\right)
\end{split}
\end{equation}
for all $s \in [0,1]$. Applying Proposition \ref{prop:firstdivest} and proceeding as in the proof of \cite[Proposition~2.3]{Wangharn}, integrating over $s\in [0,1]$ and minimizing over $r>0$, we easily obtain the desired inequality. 
\end{proof}

\begin{theorem}\label{thm:shiftharntwo}
Suppose $V \in C^1(M)$ is non-negative with $\nabla V$ bounded and $2K \leq \Ric \leq 2L$ for constants $K$ and $L$. Suppose $f\geq 0$ is a bounded measurable function and that $Y_s$ and $\divv Y_s$ are uniformly bounded. Then
\begin{equation}
(P^V_tf)^p(x) \leq P^V_t(f^p\circ F_1)(x)\exp\left[\int_0^1 \frac{p}{1+(p-1)s}\beta_t^V\left(\frac{p-1}{1+(p-1)s},Y_s\right)ds\right]
\end{equation}
for all $x \in M$, $t \geq 0$ and $p>1$, where $\beta_t$ is defined as in Proposition \ref{prop:seconddivest}.
\end{theorem}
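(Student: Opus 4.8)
The plan is to imitate the proof of Theorem~\ref{thm:shiftharnone}, but to feed the entropy-type estimate of Proposition~\ref{prop:seconddivest} into an interpolation along a \emph{moving} power exponent, in the spirit of Wang's passage from a log-Harnack inequality to a dimension-free Harnack inequality of power $p$ in \cite{Wangharn}. As in that proof, I would first reduce to $f$ bounded, continuously differentiable, bounded away from $0$ and constant outside a compact set; the general statement for bounded measurable $f\geq 0$ then follows by applying the inequality to mollifications of $f+\varepsilon$ and letting $\varepsilon\downarrow 0$, together with a monotone class argument as in \cite{Wangharn}. We may also take $t>0$, the case $t=0$ being trivial.

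Fix $x\in M$ and $p>1$, set $p(s):=1+(p-1)s$ for $s\in[0,1]$, so that $p(0)=1$, $p(1)=p$ and $p'(s)=p-1>0$, and put
\[
\tilde f_s := (f\circ F_s)^{p(s)}, \qquad u(s) := P^V_t(\tilde f_s)(x) > 0, \qquad \phi(s):=\frac{1}{p(s)}\log u(s).
\]
Each $\tilde f_s$ is bounded, positive and $C^1$, with $\divv Y_s$ and $Y_s(\tilde f_s)$ bounded, so Proposition~\ref{prop:seconddivest} applies to it. Since $F_0=\id_M$ we have $\phi(0)=\log P^V_t f(x)$ and $\phi(1)=\tfrac{1}{p}\log P^V_t(f^p\circ F_1)(x)$; hence, after multiplying by $p$ and exponentiating, the asserted inequality is equivalent to the lower bound $\phi(1)-\phi(0)\geq -\int_0^1 \tfrac{1}{p(s)}\,\beta_t^V\!\big(\tfrac{p-1}{p(s)},Y_s\big)\,ds$.

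The heart of the argument is the differentiation of $\phi$. Using $\frac{d}{ds}(g\circ F_s)=Y_s(g\circ F_s)$ together with the chain and product rules, one computes
\[
\frac{d}{ds}\tilde f_s = \frac{p'(s)}{p(s)}\,\tilde f_s\log\tilde f_s + Y_s(\tilde f_s),
\]
and hence, differentiating under $P^V_t$ (justified by uniform bounds on $\tilde f_s$, $\tilde f_s\log\tilde f_s$ and $Y_s(\tilde f_s)$ and dominated convergence in the Feynman--Kac representation), $u'(s)=\frac{p'(s)}{p(s)}E(s)+P^V_t(Y_s(\tilde f_s))(x)$ with $E(s):=P^V_t(\tilde f_s\log\tilde f_s)(x)$. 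Applying Proposition~\ref{prop:seconddivest} to $\tilde f_s$ with the choice $\delta=\delta(s):=p'(s)/p(s)$ gives $P^V_t(Y_s(\tilde f_s))(x)\geq -\delta(s)\big(E(s)-u(s)\log u(s)\big)-\beta_t^V(\delta(s),Y_s)u(s)$, so the $E(s)$ terms cancel and
\[
\frac{u'(s)}{u(s)} \;\geq\; \frac{p'(s)}{p(s)}\log u(s) - \beta_t^V(\delta(s),Y_s).
\]
Substituting this into $\phi'(s)=-\frac{p'(s)}{p(s)^2}\log u(s)+\frac{1}{p(s)}\frac{u'(s)}{u(s)}$, the two terms proportional to $\log u(s)$ also cancel, precisely because $\delta(s)/p(s)=p'(s)/p(s)^2$, leaving $\phi'(s)\geq -\tfrac{1}{p(s)}\beta_t^V(\delta(s),Y_s)$. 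Integrating over $[0,1]$, multiplying by $p$ and exponentiating yields the claimed inequality, since $p\,\phi(0)=\log(P^V_t f)^p(x)$, $p\,\phi(1)=\log P^V_t(f^p\circ F_1)(x)$, $p(s)=1+(p-1)s$ and $\delta(s)=(p-1)/(1+(p-1)s)$.

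The only delicate point, exactly as in Wang's original argument and in \cite{ThalmaierThompson2018}, is the justification that $s\mapsto u(s)$ is continuously differentiable with derivative obtained by passing $\frac{d}{ds}$ through $P^V_t$, and that the constant in Proposition~\ref{prop:seconddivest} applied to $\tilde f_s$ depends on $Y_s$ only through $\|Y_s\|_\infty$ and $\|\divv Y_s\|_\infty$, so that $s\mapsto\beta_t^V(\delta(s),Y_s)$ is bounded on $[0,1]$ (note $\delta(s)$ stays in $[(p-1)/p,\,p-1]$, bounded away from $0$) and the integral converges; both rely on the standing hypotheses that $\{F_s\}$ is a $C^1$ family of diffeomorphisms with $Y_s$ and $\divv Y_s$ uniformly bounded, which also ensure the requirements ``$\divv Y_s$, $Y_s(\tilde f_s)$ bounded'' of Proposition~\ref{prop:seconddivest}. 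Granting these routine regularity facts, the remainder is the elementary calculus above.
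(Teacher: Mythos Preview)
Your proof is correct and follows essentially the same approach as the paper's. The paper sets $\beta(s):=1+(p-1)s$, differentiates $\log\big(P^V_t(f^{\beta(s)}(F_s))(x)\big)^{p/\beta(s)}$ and, citing Wang's argument, asserts the lower bound $-\frac{p}{\beta(s)}\beta_t^V\big(\tfrac{p-1}{\beta(s)},Y_s\big)$; you simply make this computation explicit with the same interpolation $p(s)=\beta(s)$ and the same choice $\delta(s)=(p-1)/p(s)$, so that the entropy terms from Proposition~\ref{prop:seconddivest} cancel against those produced by differentiating the exponent.
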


\begin{proof}
As for the previous theorem, it suffices to prove for $f$ continuously differentiable. The result extends to more general $f$ by approximation. Applying Proposition \ref{prop:seconddivest}, as in the proof of \cite[Proposition~2.3]{Wangharn} with $\beta(s):=1+(p-1)s$ for $s \in [0,1]$, we obtain
\begin{equation}
\frac{d}{ds}\log \left(P^V_t\left(f^{\beta(s)}(F_s)\right)(x)\right)^{\frac{p}{\beta(s)}} \geq -\frac{p}{\beta(s)}\beta_t^V\left(\frac{p-1}{\beta(s)},Y_s\right)
\end{equation}
for $s \in [0,1]$. Integrating over $s$ yields the result. 
\end{proof}

Looking ahead, it would now be desirable to weaken our assumptions on $dV$, such as to suppose simply that $dV$ exists in some weak sense and is locally Kato, or indeed to eliminate terms involving $dV$ altogether. To do the latter could however be difficult, since $dV$ appears natually in Theorem \ref{thm:charac}. Assumptions involving $dV$ do appear elsewhere in the literature, such as in the celebrated work of Li and Yau \cite{LiYau1986}.

\end{document}